\renewcommand{\subseteq}{\subset}
\newcommand{\genskelish}{local\xspace}
\newcommand{\homogeneity}{weak homogeneity\xspace}
\newcommand{\homogeneous}{weakly homogeneous\xspace}
\newcommand{\card}[1]{\|#1\|}
\newcommand{\R}{F} 
\newcommand{\RSC}{\R_{\mathrm{SC}}}
\newcommand{\AlgR}{F}
\newcommand{\K}{\mathbf{K}}
\newcommand{\Ksc}{\K_{\mathrm{SC}}}
\newcommand{\Ks}[1]{\K^{\subset#1}}
\newcommand{\Lsc}{\mathcal{L}_{\mathrm{sc}}}
\newcommand{\Ssc}{\Sigma_{\mathrm{sc}}}
\newcommand{\Thrsc}{\mathit{Th}_{\R}} 
\newcommand{\Thr}{\mathit{Th}_{\R}} 
\newcommand{\Fraisse}{Fra\"\i ss\'e\xspace}
\DeclareMathOperator{\Aut}{Aut}
\DeclareMathOperator{\Mod}{Mod}
\DeclareMathOperator{\Str}{Str}
\newcommand{\PhiGI}[1]{\Phi_{\mathrm{GI}}^{#1}}
\newcommand{\fin}[1]{[#1]^{fin}}
\newcommand{\simplex}[1]{\Delta_#1}
\newcommand{\SC}{S} 
\newcommand{\zo}{0-1 law\xspace}
\newcommand{\zos}{0-1 laws\xspace}
\newcommand{\Zo}{0-1 Law\xspace}
\newcommand{\fr}[2]{#2^{(#1)}}
\newcommand{\f}[2]{#1^{(#2)}}
\begin{document}
\title{The infinite random simplicial complex}
\author{Andrew Brooke-Taylor\thanks{Supported by 
the Heilbronn Institute for Mathematical Research at the University of Bristol 
and at Kobe University by 
a JSPS Postdoctoral Fellowship for Foreign Researchers and 
JSPS Grant-in-Aid no. 23 01765.}\ \ and Damiano Testa\thanks{Damiano Testa, Mathematics Institute, University of Warwick, Coventry, CV4 7AL, United Kingdom}}
\date{\today}
\maketitle

\section*{Abstract}
We study the \Fraisse limit of the class of all finite simplicial complexes.
Whilst the natural model-theoretic setting for this class
uses an infinite language,
a range of 
results associated with \Fraisse limits of structures for finite languages
carry across to this important example. 
We introduce the notion of a \emph{local class}, with the class of
finite simplicial complexes as an archetypal example, and in this 
general context
prove the existence of a 0-1 law and other basic model-theoretic results.
Constraining to the case where all relations are symmetric, we
show that every direct limit of finite groups, and every metrizable profinite
group, appears as a subgroup of the automorphism group of the \Fraisse limit.
Finally, for the specific case of simplicial complexes,
we show that its geometric realisation is topologically surprisingly simple: 
despite the combinatorial complexity of the \Fraisse limit,
its geometric realisation is homeomorphic to the infinite simplex.

\section{Introduction}

Simplicial complexes are important objects of study in a 
variety of areas of mathematics, including 
algebraic topology and combinatorics.
Much attention has been paid to random $d$-dimensional simplicial complexes
with complete $(d-1)$-skeleton, for fixed $d$.
Indeed, Rado's original random graph paper \cite{Rad:UGUF} shows that there
is a universal countable $d$-dimensional simplicial complex. 
Blass and Harary \cite{BlH:PAAGC} have shown in this context that a 0-1 law
holds, and more 
recently questions about homology of random $d$-dimensional simplicial complexes
have come to the fore, such as in 
\cite{LiM:HCR2C} and 
\cite{MeW:HCRkdC}.

However, all of this leaves open the analogous questions for random simplicial
complexes with no such dimension restriction. 
In the present paper we address this topic.
Rather than the uniform probability
measure on the set of $n$ vertex
simplicial complexes, we consider what is arguably a more natural measure
for this context: the measure induced by building up simplicial
complexes inductively by dimension, 
tossing a fair coin for each potential $n$-simplex (with
its full $(n-1)$-skeleton already in the complex) 
to determine whether it will be in the final simplicial complex.
We show in Theorem~\ref{01Law} that with this probability measure the
class of all finite simplicial complexes bears a 0-1 law.

The proof of Theorem~\ref{01Law} is actually a relatively
simple modification of known
techniques from the theory of \emph{\Fraisse classes}, and it is in this
setting that we frame our results.  
We define the notion of a \emph{local class},
a notion which resembles and is closely related to Oberschelp's
notion of a parametric class \cite{Obe:MSNTWI},
and which will include
the class of finite simplicial complexes as an example.
We then 
isolate a property common to local classes and parametric classes,
the \emph{Adoptive Property},
and use this property to demonstrate the existence of a \zo for 
local classes.
The Adoptive Property 
is also similar to but not quite subsumed by an extant notion 
in the literature, that
of \emph{admitting substitutions} due to Koponen~\cite{Kop:APEPRlCS},
and the arguments used are correspondingly similar.

Integral to this approach is the countable structure known as the
\emph{\Fraisse limit} of the class of structures in question, 
which in the simplicial 
complex case we dub the \emph{infinite random simplicial complex} $\RSC$.
The automorphism groups of \Fraisse limits are an area of extensive 
research (see for example \cite{MTe:SSAG, Tru:GCUG, Tru:43}), 
and as a nod to this we prove in Section~\ref{algebra}
some basic results about the range of subgroups of $\Aut(\RSC)$.
Again our results hold in broader generality, applying to 
the automorphism group of any local class 
for which all relations are symmetric.

Linial and Meshulam \cite{LiM:HCR2C} conclude by opining that
``\ldots further study of topological properties of random complexes will 
prove both interesting and useful.''
In Section~\ref{topology} we 
prove 
a surprising result
about the topology of $\RSC$: despite its high combinatorial complexity, 
$\RSC$ has a topologically very simple geometric realisation, 
homeomorphic to the infinite simplex.
This naturally raises the question of whether properties such as contractibility
are held by almost all finite simplicial complexes: a negative answer would
yield a proof that the property in question is not first order definable.

\section{Preliminaries and definitions}\label{prelims}
%
We begin by fixing our
(mostly standard) model-theoretic notation;
see for example \cite[\S1]{EbF:FMT} or \cite[\S1]{Hod:MT} 
for further details.
Let $\Sigma = \{R_n\st n\in\N\}$ be a relational signature (so, each $R_n$ is
a relation symbol for our language).
We write $\Sigma$-structures as 
$M=\langle |M|, R_0^M, R_1^M,\ldots\rangle$, where 
$|M|$ is the underlying set of $M$ and
$R_n^M$ is the interpretation of $R_n$ in $M$: that is,
if the arity of $R_n$ is $a_n$, then $R_n^M$ is the set of $a_n$-tuples of
elements of $|M|$ for which the relation $R_n$ is said to hold in $M$.
For a $\Sigma$-structure $M$ and a first
order sentence $\phi$ over $\Sigma$
the notation $M\sat\phi$ means that $\phi$ holds of $M$;
similarly, for a set $\Phi$ of first order sentences, $M\sat\Phi$
means that every $\phi\in\Phi$ holds of $M$.
We use $\card{M}$ to denote the cardinality of $M$, that is, the cardinality of the underlying set
$|M|$ of $M$.
For any set $X$ we denote by $[X]^m$ the set of $m$-element subsets of $X$,
and by $\fin{X}$ the set of finite subsets of $X$.
We use $X\subset Y$ to denote that $X$ is a (not necessarily proper) subset
of $Y$.

We shall be primarily interested in classes of finite structures
(in particular, finite simplicial complexes) up to isomorphism; 
for this, it will be convenient to assume that these structures have subsets
of $\N$ as their underlying sets.
Thus, by ``class of structures'' we shall mean a class $\K$ of finite 
structures $M$ with $|M|\subset\N$.
Concomitantly, we use
$\Mod\Phi$ to denote the class of finite $\Sigma$-structures $M$ 
such that $M\sat\Phi$ and $|M|\subset\N$, and similarly 
we denote by $\Str\Sigma$ the class of all finite $\Sigma$-structures $M$ with
$|M|\subset\N$.
Identity of formulas is denoted by $\equiv$, not to be confused with
$=$ (which is used within formulas); thus for example,
$\phi\equiv R(x_1,\ldots,x_n)$ means that $\phi$ is the formula
$R(x_1,\ldots,x_n)$.

It will be important to distinguish between the following two notions of ``substructure'',
as both have a role to play in what is to follow.
\begin{defn}
Suppose $B$ is a structure for a relational signature $\Sigma$.
A \emph{substructure} $A$ of $B$ is the induced substructure on a subset of $B$.
That is, $A$ is a substructure of $B$ if $|A|\subseteq|B|$ and for every
relation symbol $R$ in $\Sigma$ and 
tuple $\mathbf{a}$ from $A$ of length the arity of $R$, 
$R(\mathbf{a})$ holds in $A$ if and only if $R(\mathbf{a})$ holds in $B$.
A \emph{subobject} $B'$ of $B$ is a
$\Sigma$-structure $B'$ such that $|B'|\subseteq|B|$ and the inclusion
map is a $\Sigma$-homomorphism.  
That is, for every 
relation symbol $R$ in $\Sigma$ and
tuple $\mathbf{b}$ from $B'$ of length the arity of $R$, if 
$R(\mathbf{b})$ holds in $B'$ then 
$R(\mathbf{b})$ holds in $B$, but not necessarily conversely.
\end{defn}
If $X\subset|B|$, we shall sometimes write $B\restr X$ to mean the substructure of $B$
with underlying set $X$.
Koponen~\cite{Kop:APEPRlCS} refers to subobjects as 
\emph{weak substructures}, but we shall stick with this less verbose 
terminology from category theory.

The main thrust of our results will be that techniques from the finite
signature case can be made to work for suitably ``locally finite'' 
theories over infinite signatures.  
For this, the following two definitions will be crucial.
\begin{defn}
A relational signature $\Sigma$ is \emph{locally finite} if for every 
$n\in\N$ there are only finitely many $n$-ary relation symbols in $\Sigma$.
\end{defn}
Local finiteness of the signature will not suffice on its own, as
high arity relations with repeated entries can emulate low arity relations.
To overcome this, we employ the following.
\begin{defn}\label{GIdefn}
For any signature $\Sigma$,
we denote by $\PhiGI{\Sigma}$
the set of axioms fitting the following schema:
\begin{description}
\item[General Irreflexivity (GI):]
For all natural numbers $n\geq 2$, all $R$ in $\Sigma$ of arity $n$, 
and all distinct $i,j \in \{1 , \ldots , n\}$,
\[
\forall x_1\cdots\forall x_{n} \;\; 
\Bigl( R(x_1,\ldots,x_n) \longrightarrow (x_i\neq x_j) \Bigr) .
\]
\end{description}
\end{defn}

Also important will be the following kind of subobject,
as they capture local properties.
\begin{defn}
Let $\Sigma$ be a relational signature, let 
$A$ be an $\Sigma$-structure, and let $n\geq 1$ be a natural number.
The \emph{$n$-frame} $\f{A}{n}$ of $A$ is the subobject of $A$ with
the same underlying set $|A|$, the same interpretations $R^A$ of the
$m$-ary relations $R$ in $\Sigma$ for all $m\leq n$, 
but the empty interpretation for all higher arity relations.
\end{defn}
We put parentheses in the superscript for an $n$-frame to distinguish from
the standard indexing convention for skeleta of simplicial complexes,
and to emphasise the point of view of $\f{A}{n}$ as a kind of
$n$-th order approximation to~$A$.

We immediately make two simple but important observations.
\begin{lemma}\label{Onnnframe}
If $\K\subset\Mod(\PhiGI{\Sigma})$ for a locally finite relational signature 
$\Sigma$, then
for any $A\in\K$ of cardinality $n\in\N$, $A$ is equal to its $n$-frame
$\fr{n}{A}$.\hfill\qedsymbol
\end{lemma}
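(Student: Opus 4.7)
The plan is to show that $A$ and its $n$-frame $\fr{n}{A}$ agree on the interpretation of every relation symbol in $\Sigma$. By the very definition of the $n$-frame, the two structures share the underlying set $|A|$ and already agree on every $m$-ary relation with $m \leq n$. So the only remaining task is to verify that for every relation symbol $R \in \Sigma$ of arity $m > n$, the interpretation $R^A$ is empty, matching the empty interpretation assigned to $R$ in the frame.

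To establish this, fix such an $R$ of arity $m > n$ and let $(a_1,\ldots,a_m)$ be an arbitrary $m$-tuple of elements of $|A|$. Since $\card{A} = n < m$, the pigeonhole principle yields distinct indices $i,j \in \{1,\ldots,m\}$ with $a_i = a_j$. Because $A \in \K \subset \Mod(\PhiGI{\Sigma})$, the instance of axiom (GI) for $R$ at the pair of indices $i,j$ holds in $A$, and this immediately forbids $(a_1,\ldots,a_m) \in R^A$. As the tuple was arbitrary, $R^A = \emptyset$, which completes the comparison.

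I do not anticipate any real obstacle: the argument is a direct application of pigeonhole combined with General Irreflexivity. It is perhaps worth remarking that local finiteness of $\Sigma$ is not actually invoked in this lemma; it is presumably included in the hypothesis because it is the standing assumption under which the $n$-frame construction is useful in the sequel (for instance, to ensure that an $n$-frame is determined by finitely many pieces of relational data per arity).
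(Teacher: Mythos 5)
Your proof is correct and is exactly the argument the paper intends: the paper states this lemma with a \qedsymbol and no written proof, treating as immediate precisely the pigeonhole-plus-General-Irreflexivity observation you spell out (any tuple of arity $m>n=\card{A}$ from $|A|$ must repeat an entry, so GI forces $R^A=\emptyset$ for all such $R$, and the two structures agree by definition in arities $\leq n$). Your side remark is also accurate: local finiteness of $\Sigma$ is not used here, but only in the companion Lemma~\ref{fincardn} and in the later development.
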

Thus, the $n$-frame of a structure $B$ 
can be thought of as being built up in a natural way
from the cardinality $n$ substructures of $B$.
\begin{lemma}\label{fincardn}
If $\K\subset\Mod(\PhiGI{\Sigma})$ for a locally finite relational signature 
$\Sigma$, then
then for every $n\in\N$, there are up to isomorphism only
finitely many structures in $\K$ of cardinality $n$.\hfill\qedsymbol
\end{lemma}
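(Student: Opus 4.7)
The plan is to reduce the problem to counting: fix $n\in\N$ and show that there are only finitely many possible interpretations on a fixed underlying set of size $n$, and hence only finitely many isomorphism types.

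First, I would invoke Lemma~\ref{Onnnframe} to observe that any $A\in\K$ with $\card{A}=n$ satisfies $A=\fr{n}{A}$. Thus the structure $A$ is entirely determined by the interpretations $R^A$ for relation symbols $R\in\Sigma$ of arity at most $n$; all higher-arity relations are interpreted as empty by definition of the $n$-frame, so they contribute nothing to determining $A$.

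Next, I would use local finiteness of $\Sigma$ to observe that, for each $m\in\{1,\ldots,n\}$, there are only finitely many $m$-ary relation symbols, and hence only finitely many relation symbols of arity at most $n$ altogether. For each such symbol $R$ of arity $m$, an interpretation $R^A$ is a subset of $|A|^m$, and since $|A|$ has exactly $n$ elements there are at most $2^{n^m}$ such subsets. Taking the product over the finitely many relation symbols of arity $\leq n$, we obtain a finite bound on the number of $\Sigma$-structures with a fixed underlying set of size $n$ satisfying $\PhiGI{\Sigma}$, and in particular on the number of members of $\K$ with that underlying set. Since any cardinality-$n$ member of $\K$ has an underlying subset of $\N$ of size $n$ and is isomorphic to one built on $\{0,1,\ldots,n-1\}$, there are only finitely many isomorphism types.

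No step is really an obstacle; the only thing to be mildly careful about is ensuring that the $n$-frame reduction genuinely eliminates the contribution of the (possibly infinitely many) relation symbols of arity greater than $n$, which is exactly what Lemma~\ref{Onnnframe} provides.
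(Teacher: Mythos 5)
Your proof is correct and is exactly the argument the paper intends: the lemma is stated with no written proof (marked as immediate), relying implicitly on Lemma~\ref{Onnnframe} to kill the relations of arity above $n$ and on local finiteness to leave only finitely many symbols to interpret, each in finitely many ways on an $n$-element set. Your write-up simply makes that counting explicit, so there is nothing to correct.
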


\subsection{\Fraisse limits}
We recall the basic theory of \Fraisse limits;
see for example \cite[Chapter~7]{Hod:MT} for more details.
Let $\Sigma$ be a countable signature 
(we shall give the explicit signature for simplicial complexes below).
A countably infinite class $\K$ of finitely generated $\Sigma$-structures 
is said to be a
\emph{\Fraisse class} if it satisfies the following three properties.
\begin{description}
\item[Hereditary Property, HP.] For any $B\in\mathbf{K}$ and any
finitely generated
substructure $A$ of $B$, $A$ is also in $\mathbf{K}$. 
\item[Joint Embedding Property, JEP.] For any $B, C\in\mathbf{K}$, there is
a $D\in\mathbf{K}$ such that both $B$ and $C$ embed into $D$.
\item[Amalgamation Property, AP.] 
Suppose that $A, B, C\in\mathbf{K}$ with embeddings 
$f_B \colon A\to B$ and $f_C \colon A\to C$.
Then there are $D\in\mathbf{K}$ and embeddings 
$g_B \colon B \to D$ and $g_C \colon C \to D$ such that 
$g_B\circ f_B=g_C\circ f_C$.
\end{description}
Note that here \emph{substructure} means induced substructure, and an
\emph{embedding} is an injective homomorphism also preserving the negations of
the relations (thus, an isomorphism to its image).
Also note that for relational languages --- that is, those without
function or constant symbols ---
finitely generated simply means finite. 
For any \Fraisse class $\K$,
there is a countable $\Sigma$-structure $F$
such that 
$\K$ is the class of all 
finitely generated $\Sigma$-structures (up to isomorphism) that can be embedded
into $F$ ($\K$ is the \emph{age} of $F$)
and $F$ has the following homogeneity property, which,
following Hodges \cite[Section~7.1]{Hod:MT}, 
we refer to  as \emph{weak homogeneity}.
\begin{defn}\label{wkhom}
A $\Sigma$-structure $F$ is \emph{\homogeneous} if for any
finitely generated $\Sigma$-structure $B$ that embeds into $F$, 
any substructure $A$ of $B$, 
and any embedding $f$ of $A$ into $F$, there is an embedding $g$ of
$B$ into $F$ extending~$f$.
\end{defn}
Weak homogeneity is sometimes also referred to as the
\emph{extension property}.
Any two countable \homogeneous structures of the same age are 
isomorphic (see for example \cite[Lemma~7.1.4]{Hod:MT}),
so such a structure $F$ is unique up to isomorphism.
This $F$ is known as the \emph{\Fraisse limit of $\K$}.

A well-known example of a \Fraisse limit is the countable random graph,
or Rado graph, 
which is the \Fraisse limit of the class of finite graphs.  
However, there are many
other kinds of structures to which the theory can be applied, such as
finite groups (using Neumann's permutation products 
\cite{Neu:PPG} for AP; the \Fraisse limit group was studied by
Hall~\cite{Hall:SCLFG})
and finite rational metric spaces (the completion of the \Fraisse limit of which is the well-known 
\emph{Urysohn space} \cite{Ury:SEMU}).

The elements of a \Fraisse class clearly need only be given up to 
isomorphism.  For concreteness, we shall always assume that any \Fraisse
class $\K$ only contains members $M$ with underlying set a subset of
$\N$ (which we take to include 0), 
and that $\K$ is closed under isomorphism of such structures.

\subsection{Simplicial complexes}\label{PrelimsSC}

Recall that a \emph{simplicial complex} on a set $V$  is 
a subset $\Delta $ of 
$\fin{V}$ (the set of finite subsets or \emph{simplices} of $V$)
which is closed under inclusion: if $x$ 
is in $\Delta$ and $y \subset x$ then $y\in \Delta$.
We refer to those simplices $x$ in $\Delta$ as \emph{faces} 
of $\Delta$.
To couch simplicial complexes in a model-theoretic framework,
we make the following definitions.
\begin{defn}
The \emph{signature of simplicial complexes} $\Ssc$ is the
set $\Ssc=\{S_i\st i\in\N\}$, where for each $i\in\N$,
$S_i$ is an $i+1$-ary relation symbol.
The \emph{language of simplicial complexes} $\Lsc$ is the first order language
comprising formulae built from $\Ssc$ 
in the usual way.
\end{defn}

\begin{defn}\label{SCdefn}
A \emph{simplicial complex} is formally a $\Ssc$ structure satisfying 
$\Phi^{\Ssc}_{\text{GI}}$ and all
axioms fitting either of the following two further schemata:
\begin{description}
\item[Symmetry:] For every positive $n$ and 
every permutation $\sigma$ of the set $\{0,\ldots,n\}$,
\[
\forall x_0 \cdots \forall x_n \;\; \Bigl( S_n(x_0,\ldots,x_n)\longrightarrow 
S_n(x_{\sigma(0)},\ldots,x_{\sigma(n)}) \Bigr).
\]
\item[Subset Closure:] For every positive $n$,
\[
\forall x_0 \cdots \forall x_n (S_n(x_0,x_1,\ldots,x_n)
\longrightarrow S_{n-1}(x_0,\ldots,x_{n-1})).
\]
\end{description}
\end{defn}
General Irreflexivity and Symmetry allow us to encode the $n+1$-element
sets of a subset of $V^{fin}$ by (all of) the ordered $n+1$-tuples of those
elements, and then the Subset Closure axiom schema describes simplicial 
complexes in this context.

Clearly the class of finite simplicial complexes satisfies 
the Hereditary Property, 
the Amalgamation Property and the Joint Embedding Property.
Thus, we may make the following definition.

\begin{defn}
The \emph{infinite random simplicial complex} $\RSC$ is the 
\Fraisse limit of the class of finite simplicial complexes.
\end{defn}

The $k$-frame $\fr{k}{\Delta}$ of a simplicial complex $\Delta$ is an 
important notion in the study of simplicial complexes, called the 
$(k-1)$-skeleton of $\Delta$ and traditionally denoted $\Delta^{k-1}$.
This shift of 1 from the cardinality of the faces to the index makes sense as
the dimension of the geometric realisation
of $\Delta$ (see Section~\ref{topology} below),
but would make little sense in the general model-theoretic setting we use,
so we have elected to introduce the new notation and terminology of
$k$-frames.

\section{Local classes}
A major feature of the theory surrounding \Fraisse limits is the existence of
\zos.
Glebskii, Kogan, Liogon'kii and Talanov~\cite{GKLT:RDR} and
Fagin~\cite{Fag:PFM} showed that the \Fraisse class of all structures for
a finite language bears a \zo.
Oberschelp~\cite{Obe:A01LC} generalised this to so-called 
\emph{parametric classes}
(see below for a definition).
More recently, 
Koponen~\cite{Kop:APEPRlCS} has generalised further,
undertaking a detailed study of \zos
based on extension axioms (which we too shall employ).
A common feature of these results is that the underlying
signature is generally taken
to be finite (although Koponen's framework in 
\cite{Kop:APEPRlCS} frequently admits the
infinite case).  Indeed this is reasonable ---
in general, classes of structures for an infinite signature will have
infinitely many structures of each finite cardinality, ruling out
numerical asymptotic probability calculations.
However, we show below that such arguments can also be made to
go through for simplicial complexes (with their infinite signature
described in Section~\ref{PrelimsSC}) and other similarly ``local'' classes
for infinite languages.
In this section we give a suitable definition of what it means to be a
local class for this purpose.

For context, we begin by recalling Oberschelp's notion of a parametric class.
\begin{defn}\label{Parametric}
Suppose $\Sigma$ is a relational signature.  A first-order sentence
$\phi$ over $\Sigma$ is \emph{parametric} if it is a conjunction of sentences
of the form
\[
\forall x_1\cdots\forall x_m((\bigwedge_{1\leq i<j\leq m}x_i\neq x_j)
\implies\psi)
\]
where $m>0$ and $\psi$ is a Boolean combination of terms
$R(y_1,\ldots,y_n)$ with $R\in\Sigma$ and 
$\{y_1,\ldots,y_n\}=\{x_1,\ldots,x_m\}$.
A class $\K$ of structures is said to be parametric if 
$\K=\Mod(\phi)$ for a parametric sentence $\phi$.
\end{defn}

Now to our variant.

\begin{defn}
Suppose $\Sigma$ is a relational signature.
A first-order sentence $\phi$ over $\Sigma$ is \emph{local} 
if it is of the form
\[
\forall x_1\cdots\forall x_m(R(x_1,\ldots,x_m)\implies\psi),
\]
where $m>0$ and $\psi$ is a quantifier-free formula.
\end{defn}
Note in particular that since $\phi$ is a sentence, the free variables of
$\psi$ are among $\{x_1,\ldots,x_m\}$; however we make no assumption that
they all appear in $\psi$.

\begin{defn}\label{Local}
Suppose $\Sigma$ is a relational language 
 with finitely many relations of each arity.  
A class $\K$ of $\Sigma$-structures is \emph{local} if there is a set
$\Phi\supset\Phi^\Sigma_{\textrm{GI}}$ of local sentences such that
$\K=\Mod\Phi$.
\end{defn}
Note that all of the axioms arising from our simplicial
complex axiom schemata are local, 
so the class of simplicial complexes is local.

An immediate connection between these definitions is the following.
The
subformula $\psi$ from Definition~\ref{Parametric} can frequently be 
written in the form $R(x_1,\ldots,x_m)\implies\psi')$, 
as in Definition~\ref{Local}, in which case 
the antecedent $\bigwedge x_i\neq x_j$ appearing in 
Definition~\ref{Parametric} is unnecessary if
the structures satisfy General Irreflexivity.
Thus, many natural examples of parametric classes are local.

On the other hand, there are parametric classes that are not local.
For example, as noted in \cite[Example~4.2.2~(c)]{EbF:FMT},
\[
\forall\text{ distinct }x_1,\ldots,x_k(R(x_1,\ldots,x_k)\land\lnot
R(x_1,\ldots,x_k))
\]
for $k$-ary $R$ is a parametric sentence with no models
of size $k$ or more.  
In contrast, every local class $\K$ contains structures of every finite size:
because of the form of local sentences, any finite set endowed with the
trivial (empty) interpretation for every relation will be a member of $\K$.

\begin{prop}
Any local class is a \Fraisse class.
\end{prop}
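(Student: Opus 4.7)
The plan is to verify directly the four defining properties of a Fraïssé class: countable infinity, HP, JEP, and AP. Countability of $\K$ is immediate from Lemma~\ref{fincardn}: since $\Phi\supset\PhiGI{\Sigma}$ and $\Sigma$ has finitely many relations of each arity, there are only finitely many isomorphism types of each finite cardinality in $\K$, and hence only countably many structures in total. Infinitude follows from the observation immediately preceding the proposition: the trivial interpretation (empty for every relation) on any finite subset of $\N$ lies in $\K$, since the atomic hypothesis $R(x_1,\ldots,x_m)$ of every local axiom is false there and GI is vacuous.

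For HP, let $B\in\K$ and let $A$ be the induced substructure on some $X\subset|B|$. For any local axiom $\forall x_1\cdots\forall x_m(R(x_1,\ldots,x_m)\implies\psi)$ and tuple $\bar{a}$ from $A$ with $A\models R(\bar{a})$, induced-ness gives $B\models R(\bar{a})$ and hence $B\models\psi(\bar{a})$; since $\psi$ is quantifier-free and its free variables lie among $x_1,\ldots,x_m$, we conclude $A\models\psi(\bar{a})$. The GI axioms transfer automatically.

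For JEP and AP we employ ``free'' constructions: disjoint union for JEP, and the pushout amalgam $D=B\sqcup_A C$ for AP, obtained by identifying $f_B(A)$ with $f_C(A)$ via $f_C\circ f_B^{-1}$ and setting $R^D$ to be the union of $R^B$ and $R^C$ under this identification, for every $R\in\Sigma$. In the AP case this interpretation is well-defined because $f_B$ and $f_C$ are induced-substructure embeddings, so the two sides agree on tuples from the common copy of $A$. In each case the injections of $B$ and $C$ into $D$ are induced substructure embeddings: by construction, every relation tuple in $R^D$ lies entirely inside the image of $B$ or entirely inside the image of $C$. That same observation shows $D\in\K$: if $D\models R(\bar{d})$ then $\bar{d}$ lives on one side, whose membership in $\K$ gives $\psi(\bar{d})$ there, and the quantifier-freeness of $\psi$ transfers this back to $D$.

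The main point, and the only place where locality is genuinely used, is the observation that free amalgamation introduces no new relation tuples straddling the two sides. This is exactly what the syntactic shape of a local sentence allows: the hypothesis is a single atomic relation, so local axioms only constrain $\psi$ on tuples where some relation already holds, and never \emph{force} a new relation on a mixed tuple. Axioms with, say, a negated or universally-true hypothesis could demand relations across the glueing locus, and free amalgamation would then fail to produce a member of $\K$.
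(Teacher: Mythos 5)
Your proof is correct and takes essentially the same approach as the paper's: HP from the universal form of local sentences, JEP via the disjoint union with all relations failing on mixed tuples, and AP via the free amalgam $B\sqcup_A C$ with relations induced from $B$ or $C$ and never holding on straddling tuples. The only difference is that you also explicitly verify countable infinitude of $\K$ (via Lemma~\ref{fincardn} and the trivial structures), a point the paper's proof leaves implicit.
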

\begin{proof}
Suppose $\K$ is a local class.
The fact that HP holds is immediate from the fact that local sentences are
only universally quantified.
Given $B$ and $C$ in $\K$, $D$ as in JEP may be constructed as the 
disjoint union of $B$ and $C$, with the induced relations for tuples entirely
from $|B|$ or from $|C|$, and the relations always failing for mixed tuples.
Similarly, given $f_B:A\to B$ and $f_C:A\to C$ as in AP,
we may take $D$ with underlying set $|B|\dot\cup|C|/f_B(a)\sim f_C(a)$,
and relations induced from $B$ or $C$ if a tuple is entirely contained in one
(or both) of them, and not holding otherwise.
\end{proof}

\begin{lemma}\label{FramesinK}
For any local class $\K$, $A\in\K$, and $n\in\N$, $\fr{n}{A}\in\K$.
\end{lemma}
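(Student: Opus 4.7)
The plan is to verify directly that $\fr{n}{A}$ satisfies every sentence in the set $\Phi$ of local axioms defining $\K=\Mod\Phi$, and conclude $\fr{n}{A}\in\K$. Fix an arbitrary local axiom $\phi\equiv\forall x_1\cdots\forall x_m(R(x_1,\ldots,x_m)\implies\psi)$, where $R$ has arity $m$ and $\psi$ is quantifier-free with free variables among $\{x_1,\ldots,x_m\}$. I would split on whether $m>n$ or $m\leq n$. If $m>n$ then $R^{\fr{n}{A}}=\emptyset$ by the definition of the $n$-frame, so the antecedent is uniformly false and $\phi$ holds vacuously in $\fr{n}{A}$.

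For $m\leq n$, suppose $\fr{n}{A}\sat R(a_1,\ldots,a_m)$. Since every relation of arity at most $n$ has the same interpretation in $\fr{n}{A}$ as in $A$, we also have $A\sat R(a_1,\ldots,a_m)$, and General Irreflexivity in $A$ (available because $\Phi\supset\PhiGI{\Sigma}$) forces the $a_i$ to be pairwise distinct. Since $A\in\K$ we get $A\sat\psi(a_1,\ldots,a_m)$, and the remaining task is to transfer this assertion to $\fr{n}{A}$, which I would carry out by comparing the truth values of the atomic subformulas of $\psi$ after substitution.

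The only step that requires any genuine thought is the comparison of atomic subformulas $R'(y_1,\ldots,y_k)$ of $\psi$ in which $R'$ has arity $k>n$: such an atom is automatically false in $\fr{n}{A}$, whereas in $A$ it is a priori unconstrained. The saving observation is that the variables $y_j$ are all drawn from $\{x_1,\ldots,x_m\}$ and $k>n\geq m$, so the pigeonhole principle forces a repetition $a_{i_j}=a_{i_{j'}}$ with $j\neq j'$ in the substituted tuple; General Irreflexivity in $A$ then makes $R'(a_{i_1},\ldots,a_{i_k})$ false in $A$ as well. For atoms $R'(y_1,\ldots,y_k)$ with $k\leq n$ the interpretations of $R'$ in the two structures coincide, and equality atoms depend only on the common underlying set, so the truth value of $\psi(a_1,\ldots,a_m)$ is identical in $A$ and in $\fr{n}{A}$.

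To close, I would note that $\fr{n}{A}$ itself satisfies $\PhiGI{\Sigma}$: for arities at most $n$ the axioms pass directly from $A$, and for arities greater than $n$ the relations are empty so GI is vacuous. Combined with the previous paragraphs this yields $\fr{n}{A}\sat\Phi$, hence $\fr{n}{A}\in\K$. I expect the pigeonhole/GI interaction in the third paragraph to be the only non-routine ingredient; everything else is careful bookkeeping about how the frame operation interacts with the restricted syntactic shape of local sentences.
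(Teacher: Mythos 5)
Your proof is correct and takes the same approach as the paper, whose entire proof of this lemma is that it is ``immediate from the definition of local sentences'': you have simply carried out that direct verification in full. The one non-routine point you isolate --- that a positive atom of arity $k>n$ occurring in the consequent $\psi$ is false in $A$ as well as in $\fr{n}{A}$, by pigeonhole on the repeated variables together with General Irreflexivity --- is exactly the detail the paper's one-line proof leaves implicit, and your handling of it is sound.
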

\begin{proof}
This is immediate from the definition of local sentences.
\end{proof}

Central to our probabilistic approach will be the following property,
which unites local and parametric classes.
\begin{description}
\item[Adoptive Property, AdP.]
Suppose $B\in\mathbf{K}$,
$A\in\K$ is a substructure of $B$ of cardinality $n$, and
$A'\in\K$ is a structure on $|A|$ with the same $(n-1)$-frame
$\f{A}{n-1}$ as $A$.  Then there is a structure $B'\in\K$ with underling set 
$|B|$
such that 
$\f{(B')}{n-1}=\f{B}{n-1}$, and for every $n$-ary relation $R\in\calL$,
\[
R^{B'} = (R^B\smallsetminus|A|^n)\cup R^{A'}.
\]
That is, $B'\sat R(b_1,\ldots,b_n)$ if and only if either 
\begin{enumerate}
\item one of the
$b_i$'s is not in $A$ and $B\sat R(b_1,\ldots,b_n)$,
or
\item $\{b_1,\ldots,b_n\}=|A|$ and $A'\sat R(b_1,\dots,b_n)$.
\end{enumerate}
\end{description}

The Adoptive Property is closely related to Koponen's 
notion of \emph{admitting substitutions}~\cite{Kop:APEPRlCS}, with the
difference being that the Adoptive Property takes a more frame-by-frame
approach.  As such, the Adoptive Property is more constrained in 
its antecedent for lower
dimensions, requiring that $\fr{n-1}{A}=\fr{n-1}{A'}$, and more liberal
in its consequent
in higher dimensions: AdP only requires that \emph{some} $B'$ with the
desired $n$-frame exists, with no demands placed on higher-arity relations,
whereas Koponen's substitutions define $B'$ in all dimensions from 
$B$ and $A'$.
For parametric classes
we may encapsulate this greater generality
as follows.

\begin{prop}
Every parametric class $\K$ enjoys the Adoptive Property.
\end{prop}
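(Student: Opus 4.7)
The plan is to construct $B'$ explicitly by ``adopting'' $A'$ in place of $A$ uniformly across all arities, and then to verify $B'\in\K$ by exploiting the defining shape of parametric sentences. Concretely, I would take $|B'|=|B|$ and, for every $R\in\Sigma$ of arity $k$, set
\[
R^{B'}=(R^B\smallsetminus|A|^k)\cup R^{A'}.
\]
For $k\leq n-1$ this formula collapses to $R^B$, using that $A$ is a substructure of $B$ (so $R^A=R^B\cap|A|^k$) and that $\fr{n-1}{A}=\fr{n-1}{A'}$ (so $R^A=R^{A'}$); hence $\fr{n-1}{B'}=\fr{n-1}{B}$. For $k=n$ the definition is exactly the one prescribed by AdP, and for $k>n$, where AdP imposes no requirement, the same recipe is adopted so that the later verification can proceed uniformly.

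The verification rests on the characteristic feature of a parametric sentence: in each conjunct
\[
\forall x_1\cdots\forall x_m\Bigl((\bigwedge_{1\leq i<j\leq m}x_i\neq x_j)\implies\psi\Bigr),
\]
every relation term $R(y_1,\ldots,y_k)$ occurring in $\psi$ satisfies $\{y_1,\ldots,y_k\}=\{x_1,\ldots,x_m\}$. Consequently, for any pairwise distinct tuple $(b_1,\ldots,b_m)$ in $|B'|^m$, the entry set of every tuple $(y_1(\bar b),\ldots,y_k(\bar b))$ arising in $\psi$ is exactly $\{b_1,\ldots,b_m\}$. Hence either every such tuple lies in $|A|^k$ (when $\{b_1,\ldots,b_m\}\subset|A|$) or none does (when some $b_i\notin|A|$). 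This clean dichotomy is the real engine of the argument.

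In the first alternative, the construction forces $R^{B'}(\bar y(\bar b))=R^{A'}(\bar y(\bar b))$ for every relation term of $\psi$, so $\psi(\bar b)$ in $B'$ is equivalent to $\psi(\bar b)$ in $A'$, which holds because $A'\in\K$ and $(b_1,\ldots,b_m)$ is a distinct tuple from $|A|=|A'|$. In the second alternative, the construction forces $R^{B'}(\bar y(\bar b))=R^B(\bar y(\bar b))$ for every relation term of $\psi$, so $\psi(\bar b)$ in $B'$ is equivalent to $\psi(\bar b)$ in $B$, which holds because $B\in\K$. Thus every parametric conjunct defining $\K$ is satisfied by $B'$. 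There is no substantial obstacle: the proof is pure bookkeeping, and the only point deserving care is the collapse of the uniform formula to $R^B$ on low arities, which is exactly what ensures that the $(n-1)$-frame is preserved. The deeper reason the proposition works is the built-in coincidence $\{y_1,\ldots,y_k\}=\{x_1,\ldots,x_m\}$ in parametric terms, which is precisely what makes ``substitution on $|A|$'' well-defined and correct regardless of the arities involved.
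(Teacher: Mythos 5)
Your proof is correct and is essentially the paper's own argument: the paper takes exactly the same uniform definition $R^{B'}=(R^B\smallsetminus|A|^k)\cup R^{A'}$ in every arity $k$ and asserts that the distinctness requirement in parametric sentences makes $B'\in\K$ "clear". Your write-up simply spells out that clearness — the dichotomy that each instantiated relation term has entry set exactly $\{b_1,\ldots,b_m\}$, hence lies wholly inside or wholly outside $|A|^k$, together with the collapse to $R^B$ in arities below $n$ — so there is nothing to change.
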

\begin{proof}
Because of the distinctness requirement on the variables 
in parametric sentences, we may take  
$R^{B'} = (R^B\smallsetminus|A|^k)\cup R^{A'}$
for every $k$-ary relation $R$ in the signature and every $k\in\N$
and clearly get another member of $\K$.
\end{proof}

The extra flexibility in higher dimensions for the Adoptive Property is
crucial for the example of simplicial complexes: if one removes a face from
a simplicial complex, one must also remove every higher dimensional face 
which contains it.

\begin{prop}
Every local class $\K$ enjoys the Adoptive Property.
\end{prop}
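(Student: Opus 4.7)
The plan is to define $B'$ explicitly on $|B|$ and then verify membership in $\K$ axiom by axiom. Fix a set $\Phi$ of local sentences with $\K=\Mod\Phi$, and define $B'$ by: for every relation $R\in\Sigma$, set $R^{B'}=R^B$ if $R$ has arity $k\le n-1$, set $R^{B'}=(R^B\smallsetminus|A|^n)\cup R^{A'}$ if $R$ has arity $n$, and set $R^{B'}=\emptyset$ if $R$ has arity $k>n$. The required $(n-1)$-frame identity $\f{(B')}{n-1}=\f{B}{n-1}$ and the $n$-ary condition of AdP are immediate from this construction.

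To check $B'\sat\phi$ for a local sentence $\phi\equiv\forall x_1\cdots x_m(R(x_1,\ldots,x_m)\implies\psi)\in\Phi$: if $m>n$ then $R^{B'}=\emptyset$ makes $\phi$ vacuous, and the axioms in $\PhiGI{\Sigma}$ are verified arity-by-arity directly. So assume $m\le n$ and $B'\sat R(\bar b)$. The key observation is that, since every member of $\K$ satisfies General Irreflexivity, any atom $R'(y_1,\ldots,y_s)$ of $\psi$ with a repeated variable is false under every instantiation in $\K$; modulo such vacuous atoms, every atom of $\psi$ has arity $s\le m\le n$, so the choice of empty high-arity relations in $B'$ never impedes the truth of $\psi$.

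Split on whether $\bar b\subset|A|$. If so, then $A'\sat R(\bar b)$: for $m<n$ because $R^{B'}(\bar b)=R^B(\bar b)=R^A(\bar b)=R^{A'}(\bar b)$ using that $A$ is induced from $B$ and shares the $(n-1)$-frame with $A'$; for $m=n$ directly from the definition of $R^{B'}$ on $|A|^n$. Since $A'\in\K$, $A'\sat\psi(\bar b)$. For each atom $R'(b_{i_1},\ldots,b_{i_s})$ of $\psi$, $B'$ and $A'$ agree: if $s\le n-1$ this is the same $(n-1)$-frame argument, and if $s=n$ then the distinct $b_{i_l}$'s exhaust $|A|$, so $R'^{B'}=R'^{A'}$ on this tuple by definition. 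Hence $B'\sat\psi(\bar b)$.

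If instead some $b_{i_0}\notin|A|$, then $R^{B'}(\bar b)=R^B(\bar b)$, so $B\sat R(\bar b)$ and $B\sat\psi(\bar b)$. For every atom $R'(b_{i_1},\ldots,b_{i_s})$ of $\psi$, $B'$ and $B$ can disagree only when $s=n$ and the sub-tuple lies entirely in $|A|^n$; but if all $b_{i_l}$ were in $|A|$ then $i_0\notin\{i_1,\ldots,i_s\}$, forcing $s\le m-1\le n-1$, contradicting $s=n$. So the atoms of $\psi(\bar b)$ all agree between $B$ and $B'$, giving $B'\sat\psi(\bar b)$. The main delicacy of the argument lies in precisely this last observation: it is what makes the frame-by-frame construction compatible with axioms whose triggers have arity exceeding $n$.
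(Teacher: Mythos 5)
Your proof is correct and takes essentially the same route as the paper: the paper's preliminary reduction ``without loss of generality $B=\fr{n}{B}$'' (via Lemma~\ref{FramesinK}) produces exactly your explicitly defined $B'$ with empty relations above arity $n$, and its verification uses the same arity trichotomy ($m>n$ vacuous, $m\le n$ split according to whether the witnessing tuple lies in $|A|$, transferring the implication from $A'$ or from $B$ respectively). Your atom-by-atom justification of those transfers---discarding repeated-variable atoms via General Irreflexivity and noting that an $n$-ary atom with distinct entries inside $|A|$ must exhaust $|A|$---spells out details the paper leaves implicit, but it is the same argument.
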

\begin{proof}
Let $A,A',B\in\K$ and $n\in\N$ be as in the statement of the Adoptive Property.
By Lemma~\ref{FramesinK} we may assume without loss of generality that
$B=\fr{n}{B}$.  Then $B'=\fr{n}{B'}$ constructed from $B$ as per the 
Adoptive Property is also a member of $\K$.
Indeed, consider any local sentence 
$\phi\equiv\forall x_1\cdots\forall x_m(R(x_1,\ldots,x_m)\implies\psi)$ in 
$\Phi_\K$.
If $m>n$ then $\phi$ holds vacuously in $B'$, and if $m<n$ then
$\phi$ holds in $B'$ because it holds in $B$.  If $m=n$, then if
$\{b_1,\ldots,b_n\}=|A|$, 
\begin{align*}
B'&\sat R(b_1,\ldots,b_n)\implies\psi(b_1,\ldots,b_n)\\
\text{ because}\quad
A'&\sat R(b_1,\ldots,b_n)\implies\psi(b_1,\ldots,b_n), 
\end{align*}
and if 
$\{b_1,\ldots,b_n\}\neq|A|$, 
\begin{align*}
B'&\sat R(b_1,\ldots,b_n)\implies\psi(b_1,\ldots,b_n)\\
\text{ because}\quad
B&\sat R(b_1,\ldots,b_n)\implies\psi(b_1,\ldots,b_n). 
\end{align*}
\end{proof}

The 
restriction of the cardinality of $A$ to $n$ does not make the
Adoptive Property weaker than it would otherwise have been.

\begin{lemma}\label{strAdP}
The following are equivalent for a class $\K$ of structures
over a locally finite relational signature.
\begin{enumerate}
\item\label{strAdPAdP} The Adoptive Property.
\item\label{strAdPstrAdP}
For every $B\in\mathbf{K}$,
$A\in\K$ a finite substructure of $B$ of cardinality at least $n$, 
and
$A'\in\K$ a structure on $|A|$ with the same $(n-1)$-frame
$\f{A}{n-1}$ as $A$,  
there is a structure $B'\in\K$ with underling set 
$|B|$
such that $\f{(B')}{n-1}=\f{B}{n-1}$ and
for every $n$-ary relation $R\in\calL$,
$R^{B'}=(R^B\smallsetminus|A|^n)\cup R^{A'}$.
\end{enumerate}
\end{lemma}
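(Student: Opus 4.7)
The direction (\ref{strAdPstrAdP}) $\Rightarrow$ (\ref{strAdPAdP}) is immediate: specialise to the case $\card{A}=n$. My plan for the converse is to apply the Adoptive Property iteratively, once for each $n$-element subset of $|A|$.

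More precisely, I would enumerate the finitely many $n$-element subsets $X_1,\ldots,X_k$ of $|A|$, set $B_0:=B$, and at step $i$ apply (\ref{strAdPAdP}) with $B_{i-1}$ in the role of $B$, the substructure $B_{i-1}\restr X_i$ in the role of $A$, and $A'\restr X_i$ in the role of $A'$, obtaining some $B_i\in\K$; the output is then $B':=B_k$. Each application demands that its inputs lie in $\K$ and share an $(n-1)$-frame. The Hereditary Property --- automatic for local classes since local sentences are universal --- places $B_{i-1}\restr X_i$ and $A'\restr X_i$ in $\K$, and, using inductively that AdP preserves the $(n-1)$-frame at each previous step together with the facts that $A$ is a substructure of $B$ and $\f{A}{n-1}=\f{A'}{n-1}$, one computes
\[
\f{(B_{i-1}\restr X_i)}{n-1}\;=\;\f{B}{n-1}\restr X_i\;=\;\f{A}{n-1}\restr X_i\;=\;\f{A'}{n-1}\restr X_i.
\]

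To finish, I would check that $B_k$ has the desired $n$-ary relations. A tuple $\tau$ outside $|A|^n$ has some entry outside every $X_i$ and so is untouched, giving $R^{B'}(\tau)=R^B(\tau)$. A tuple $\tau\in|A|^n$ lies in $X_i^n$ precisely for those $X_i$ containing its entries; at each such step $\tau$ is removed from $R^{B_{i-1}}$ and then reinstated exactly if $\tau\in R^{A'}$, so after the last such step $R^{B'}(\tau)=R^{A'}(\tau)$. The point at which care is required --- and which I expect to be the one delicate step --- is the case of tuples with repeated entries, which may lie in several $X_i^n$ simultaneously; the key observation is that each of the relevant steps resets the relation on $\tau$ to agree with $A'$, so the intermediate adjustments overwrite one another consistently and the final identity $R^{B'}=(R^B\smallsetminus|A|^n)\cup R^{A'}$ holds for every $n$-ary $R$.
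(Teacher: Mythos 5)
Your proof is correct and takes essentially the same route as the paper's: starting from $B_0=B$, enumerate the $n$-element subsets $X_1,\ldots,X_{\|A\|\choose n}$ of $|A|$ and apply the Adoptive Property once per subset to $B_{i-1}$ and $A'\restr X_i$, with the easy direction handled by specialising to $\card{A}=n$. The details you spell out --- the $(n-1)$-frame matching needed at each step, membership of the restricted structures in $\K$ via heredity, and the bookkeeping for tuples with repeated entries --- are all left implicit in the paper's proof, so your write-up is, if anything, more complete.
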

\begin{proof}
\eqref{strAdPstrAdP} $\metaimplies$ \eqref{strAdPAdP} 
is immediate.
For the converse, let $B$, $A$ and $A'$ be as in \eqref{strAdPstrAdP},
and let $B_0=B$.
Let $X_1,\ldots,X_{\|A\|\choose n}$ be an enumeration of the subsets of
$|A|$ of cardinality $n$.
Define $B_i$ recursively in $i$, letting $B_i$ equal the structure 
$B'$ obtained from the Adoptive Property applied to $B_{i-1}$ 
and $A'\restr X_i$.
Then $B_{\|A\|\choose n}$ satisfies the requirements for the structure 
$B'$ as in \eqref{strAdPstrAdP}.
\end{proof}

\subsection{Examples of local classes}\label{Egs}

The class of 
simplicial complexes is not the only natural example of a local class.

\subsubsection*{Hypergraphs}

The definition of a \emph{hypergraph} 
varies from reference to reference;
we shall take a hypergraph $H$ on a set $X$ to be simply
a subset of $\fin{X}=\{Y\subset X\st Y\text{ is finite}\}$.
As for simplicial complexes, we refer to $Y\in H$ as a \emph{face} of $H$.

In our model-theoretic setting, this definition is encapsulated as follows.
\begin{defn}
A \emph{hypergraph} is formally a $\Ssc$-structure satisfying the
General Irreflexivity and Symmetry schemata of Definitions~\ref{GIdefn} and
\ref{SCdefn} above.
\end{defn}
Thus, the difference from the definition of a simplicial complex is that we
do not require that the Subset Closure axiom schema be satisfied by
hypergraphs.
Clearly the class of finite hypergraphs is a local class.

Note that if all of the faces of a hypergraph $H$ 
are of the same cardinality $d$, it may be identified with the simplicial 
complex with complete $(d-1)$-frame.
Indeed, Rado's construction \cite{Rad:UGUF}
of the \Fraisse limit of $d$-dimensional
simplicial complexes with complete $(d-1)$-skeleton, which was mentioned in
the introduction, is actually cast in these terms.

\subsubsection*{Sperner families}

Recall that a \emph{Sperner family $A$} on a set $X$ is an antichain in $\fin{X}$, that is a subset 
$A$ of $\fin{X}$ with the property that if $Y,Z \in A$ 
and $Y \subset Z$, 
then $Y=Z$.  This too may be formalised to yield an example of a local class.
\begin{defn}
A \emph{Sperner family} is a hypergraph satisfying  
the following further axiom schema.
\begin{description}
\item[Non-subset:]
For every $m < n$,
\[
\forall x_0\cdots\forall x_n(S_n(x_0,\ldots,x_n)\implies
\lnot S_m(x_0,\ldots,x_m)).
\]
\end{description}
\end{defn}
Again, it is clear that Sperner families form a local class.
It should be noted, on the other hand, 
that simplicial complexes are more closely related to 
Sperner families than just via our formalism: a simplicial complex may be 
identified with the Sperner family of its minimal non-faces (or for finite
simplicial complexes, the Sperner family of its maximal faces).

\section{First order theory}
%
As mentioned above, with our definition of local classes in place,
a range of model-theoretic consequences may be obtained by 
suitably modifying standard arguments.  

Let us fix $\Sigma=\{R_{m,k}\st 0<m\in\N,1\leq k\leq k_m\}$,
a locally finite relational signature,
where each $R_{m,k}$ is an $m$-ary relation symbol, 
so that $k_m$ is the number of $m$-ary relations in $\Sigma$.
Let $\K=\Mod(\Phi_\K)$ be a
local class of finite $\Sigma$-structures, and 
let 
$\R$ be the \Fraisse limit of $\mathbf{K}$.
Of course, for concreteness the reader may think of our motivating example,
with $\Sigma=\Ssc$ and $\K$ the class of finite simplicial complexes.
Let $\Thr$ denote the complete first order theory of $\R$ over
the signature $\Sigma$, that is,
the set of all sentences over $\Sigma$ that are true in $\R$.
A variety of nice results are known regarding the theory of the 
\Fraisse limit of a
\Fraisse class for a finite language; 
we show here that some of the most important results also
hold when $\K$ is a \genskelish \Fraisse class.

\begin{thm}\label{RecCatQE}
(i) Every countable model of $\Thrsc$ is isomorphic to $\R$
(that is, $\Thrsc$ is \emph{countably categorical}).\\
(ii) $\Thrsc$
satisfies
\emph{quantifier elimination:}
for every formula $\phi$ of $\Lsc$ with free variables $x_1,\ldots,x_n$,
there is a formula $\psi$ involving no quantifiers such that
\[
\forall x_1\cdots\forall x_n \;\; \Bigl( \phi \longleftrightarrow \psi \Bigr)
\]
is in $\Thrsc$.
\end{thm}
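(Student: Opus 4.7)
My plan for both parts is to adapt the standard \Fraisse-theoretic arguments to the infinite-signature setting. The principal obstacle is that the textbook proofs assume a finite signature, so that for each $n$ there are only finitely many extension axioms and finitely many atomic formulas in $n$ variables; Lemmas~\ref{Onnnframe} and~\ref{fincardn} should supply this finiteness in our setting by bounding arity in terms of cardinality.

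For part~(i), I will introduce \emph{extension axioms}: for each pair $(A, B)$ with $A, B \in \K$, $A$ a substructure of $B$, and $\card{B} = \card{A} + 1$, let $\phi_{A,B}$ be the first-order sentence asserting that any tuple realising the quantifier-free diagram of $A$ admits an extension realising the quantifier-free diagram of $B$. The crucial check is that $\phi_{A,B}$ really is a single first-order sentence despite the infinite signature: by General Irreflexivity, only relations of arity at most $\card{B}$ can be non-trivially instantiated on $|B|$, and by local finiteness of $\Sigma$ there are only finitely many such. Each $\phi_{A, B}$ holds in $\R$ by \homogeneity, so it lies in $\Thr$. Given any countable model $M$ of $\Thr$, the inclusion $\Phi_\K \subset \Thr$ together with the universality of local sentences forces the age of $M$ to sit inside $\K$, while an induction on $\card{C}$ using the $\phi_{A, B}$ shows every $C \in \K$ embeds into $M$. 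The same axioms yield \homogeneity of $M$ by iteratively extending embeddings one element at a time, and the uniqueness clause of \Fraisse's theorem \cite[Lemma~7.1.4]{Hod:MT} then delivers $M \cong \R$.

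For part~(ii), I plan to exploit that for each $n$ only finitely many quantifier-free types in the variables $x_1, \ldots, x_n$ are consistent with $\Thr$. Indeed, by General Irreflexivity, an atomic formula $R(y_1, \ldots, y_m)$ with variables drawn from $\{x_1, \ldots, x_n\}$ can hold in $\R$ only when the $y_i$ are pairwise distinct, forcing $m \le n$; by local finiteness of $\Sigma$ there are only finitely many such $R$. Thus the atomic formulas in $x_1, \ldots, x_n$ are finite up to logical equivalence, and each quantifier-free type $p$ can be axiomatised by a single quantifier-free formula $\chi_p$. Two tuples in $\R$ realising the same quantifier-free type generate isomorphic finite substructures, and the resulting isomorphism extends to an automorphism of $\R$ by the back-and-forth that underlies part~(i); hence they realise the same complete type. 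It follows that any formula $\phi(x_1, \ldots, x_n)$ is equivalent in $\R$ to the finite disjunction $\bigvee_{p \in P_\phi} \chi_p$, where $P_\phi$ is the set of quantifier-free types that imply $\phi$, and this equivalence belongs to $\Thr$.
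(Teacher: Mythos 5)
Your proposal is correct and follows essentially the same route as the paper: for (i) you axiomatise $\Thr$ by $\Phi_\K$ together with one-point extension axioms, using General Irreflexivity and local finiteness of $\Sigma$ to see these are genuine first-order sentences, and conclude via the uniqueness of countable \homogeneous structures of a given age; for (ii) you use ultrahomogeneity to reduce truth of $\phi$ to the quantifier-free type of the tuple and take a finite disjunction, which is exactly the paper's finite set $S_\phi$ of pairs $(A,\mathbf{a})$ in different notation. The only cosmetic omission is the empty-disjunction case (when no tuple in $\R$ satisfies $\phi$), which the paper handles with a tautologically false formula such as $\lnot(x_1=x_1)$.
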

\begin{proof}
The proofs of these facts are much like 
the standard proofs for the finite $\Sigma$ case, 
as for example in~\cite[Theorem~7.4.1]{Hod:MT},
but using the fact that $\K$ is \genskelish in place of finiteness.

\noindent(i) We start by axiomatising $\Thr$.
This will be by means of a formalisation of the 
\homogeneity of $\R$, as in Definition~\ref{wkhom}.  
By induction and using the Hereditary Property, 
it is sufficient to consider structures $A$ and $B$ in $\K$ such that
$B$ is a one point extension of $A$.
Recall that we assume that every member of $\K$ has underlying set a subset of
$\N$, 
and that $\K$ is assumed to be closed under isomorphism of such structures.
Thus, we may assume without loss of generality that
$|B|=\{1,\ldots,n\}$ and $A$ is the restriction of $B$ to $\{1,\ldots,n-1\}$.
For every $m\leq n$, every $m$-ary relation symbol $R_{m,k}\in\Sigma$,
and all $i_1,\ldots,i_m$ between $1$ and $n$ inclusive, let
$\bar{R}_{m,k}^{B:i_1,\ldots,i_m}$ denote the relation symbol
$R_{m,k}$ if $R_{m,k}(i_1,\ldots,i_m)$ holds in $B$, and $\lnot R_{m,k}$ if not.
Now let $\phi_B$ denote the \emph{extension axiom} corresponding to
$A\hookrightarrow B$, that is, the formal sentence that can be written as
\begin{multline*}
\forall x_1\cdots\forall x_{n-1}\\
\Bigg(
\Big(
\bigwedge_{1\leq i\neq j\leq n-1}\!\!\!\!\!x_i\neq x_j\ \land\ 
\bigwedge_{m=1}^{n-1}\bigwedge_{k=1}^{k_m}\bigwedge_{1\leq i_1,\ldots,i_m<n}
\bar{R}^{B:i_1,\ldots,i_m}_{m,k}(x_{i_1},\ldots,x_{i_m})
\Big)\\
\implies
\exists x_n\Big(
\bigwedge_{i=1}^{n-1}x_i\neq x_n\ \land\ 
\bigwedge_{m=1}^{n}\bigwedge_{k=1}^{k_m}\bigwedge_{1\leq i_1,\ldots,i_{m}\leq n}
\bar{R}^{B:i_1,\ldots,i_{m}}_{m,k}(x_{i_1},\ldots,x_{i_{m}})
\Big)
\Bigg).
\end{multline*}
Thus, $\phi_B$ formally expresses the statement that if there is a
substructure isomorphic to $A$, then there is a substructure extending it by
one vertex which is isomorphic to $B$
(we of course take empty conjunctions to be true, so that 
in the $n=1$ case $\phi_B$
reduces to a simple statement expressing the existence of a substructure
isomorphic to $B$).
These formulas $\phi_B$ 
may be thought of as generalisations of the well-known property of the 
infinite random graph, that given any two finite sets of vertices, there is a 
vertex adjacent to every vertex in the first set and not adjacent to any vertex
in the second set.  In the random graph case the structure of $A$ 
(that is, the induced subgraph on the union of the two sets) is irrelevant,
but in our setting this will not in general be the case.

Note that in stating $\phi_B$ we have made crucial use of the fact that $\K$ 
is \genskelish. 
In particular, the conjunction over $k$ is finite for each $m$
because $\Sigma$ is locally finite, and the fact that we are free to ignore
higher-arity relations follows from General Irreflexivity.

The sentences $\phi_B$ encapsulate the \homogeneity of $\R$;
moreover by induction they show that $\K$ is a subset of the age of any
structure that satisfies all of them.
Thus, any structure satisfying 
\[
\Phi_\R = \Phi_\K\,\cup
\left\{\phi_B \; \Bigl| \; 
B\in\K \; \land \; \exists n\in\N \;\; \Bigl( |B|=\{1,\ldots,n\} \Bigr) \right\}
\]
has age equal to $\K$.
Clearly $\R\sat\Phi_\K$: 
any (local) sentence $\phi\in\Phi_\K$ has negation of the form
\(
\exists x_1\cdots\exists x_m(\psi)
\)
where $\psi$ is quantifier-free,
so a witnessing tuple in $\R$ would also witness the failure of $\phi$ in
a finite substructure.
Thus, $\Phi_\R$ is a subset of $\Thr$, such that that any model of $\Phi_\R$
is a weakly homogeneous structure with age $\K$, and therefore
is isomorphic to $F$. 
So $\Thr$ is countably categorical, and by the G\"odel completeness theorem
we have that $\Phi_\R$ provides a complete axiomatisation of $\Thr$.

\noindent(ii) Suppose $\phi$ is a formula over $\Sigma$
with free variables $x_1,\ldots,x_n$.  We shall exhibit a formula
$\psi$ with free variables $x_1,\ldots,x_n$ and no quantifiers, such that 
relative to $\Thrsc$, $\psi$ is equivalent to $\phi$.
If there is no tuple $\mathbf{r}=(r_1,\ldots,r_n)$ from 
$\R$ such that $\phi(\mathbf{r})$ holds in $F$, then 
any tautologically false 
formula such as $\lnot(x_1=x_1)$ will do for $\psi$.  
Otherwise,
consider any tuple $\mathbf{r}=(r_1,\ldots,r_n)$
from $\R$ such that $\phi(\mathbf{r})$ holds in $\R$.
A standard property of \Fraisse limits is that they are
\emph{ultrahomogeneous}: any isomorphism of finite substructures of $\R$
extends to an automorphism of $\R$ 
(see for example Hodges~\cite[Lemma~7.1.4]{Hod:MT}).
Thus,
for any 
$\mathbf{r'}$ with a coordinate-respecting isomorphism $f$ from
$\R\restricted\mathbf{r}$ to
$\R\restricted\mathbf{r'}$,
we have an automorphism $\bar f$ of $\R$ 
extending $f$.
Since automorphisms preserve all formulas, $\phi(\mathbf{r'})$ will 
therefore also hold in $\R$.
Hence, whether $\phi$ holds on a given $\mathbf{r}$ depends entirely on the
induced substructure on the elements of $\R$ appearing in
$\mathbf{r}$.  
So 
consider 
the set
\[
S_\phi=
\left\{
\begin{tabular}{r|@{\;\;\;}l}
$(A,\mathbf{a})$
&
\parbox{250pt}{$A$ is a $\Sigma$-structure on $\{1,\ldots,n\}$ in $\K$ 

$\land$ $\mathbf{a}=(a_1,\ldots,a_n)\in\{1,\ldots,n\}^n$ 

$\land$ there is an embedding $f \colon A\hookrightarrow \R$
such that $\phi(f(a_1),\ldots,f(a_n))$}
\end{tabular}
\right\}
\]
(there are of course redundancies in $S_\phi$ that could be eliminated,
but it does not seem worth the notational hassle).
Then $S_\phi$ is finite since $\K$ is \genskelish, and
we may take our $\psi$ to be the formula
\[
\bigvee_{(A,\mathbf{a})\in S_\phi}
\bigwedge_{m=1}^{n}
\bigwedge_{k=1}^{k_m}
\bigwedge_{1\leq i_1<\cdots<i_m\leq n}
\bar{R}^{A:a_{i_1},\ldots,a_{i_m}}_{m,k}(x_{i_1},\ldots,x_{i_m})
\]
(with 
\(\bar{R}^{A:a_{i_1},\ldots,a_{i_m}}_{m,k}\) as in part (i)).
This $\psi$ is a quantifier-free equivalent of $\phi$ relative to 
$\Thrsc$.
\end{proof}


\section{A probabilistic approach}

In this section we shall consider a probabilistic characterisation
of the \Fraisse limit of a \genskelish class $\K$.
This will give rise to our \zo for first order
sentences about 
elements of $\K$, relative to the appropriate measure on the set
of members of $\K$ whose underlying set has $m$ elements.  
We shall start by discussing the case of simplicial complexes in
order to convey the main ideas more concretely, and then move on to
the general case of a \genskelish \Fraisse class~$\K$.

Probably the best-known description of the random graph is as the
graph almost surely obtained (up to isomorphism) by including each possible
edge with probability one half.  Our generalisation of this to the
simplicial complex context is as follows.  
Having constructed by tossing a fair coin a graph
(in the infinite case, almost surely the random graph) as the 1-skeleton
of a simplicial complex, we continue to make all decisions in the 
construction of the complex through all higher
dimensions by tossing a fair coin.  That is, given a triple
$v_0, v_1, v_2$ of vertices such that 
$\{v_0,v_1\}$, 
$\{v_0,v_2\}$ and 
$\{v_1,v_2\}$ are all edges in the complex, we toss a fair coin
to determine whether $\{v_0,v_1,v_2\}$ is a 2-face in the simplicial
complex; and so on through all dimensions.
Of course, to see whether a decision needs to be made for a given
$n$-tuple of vertices, one only needs to know the decisions for 
subsets
of those vertices.  
Despite the fact that the resulting probability measure for 
simplicial complexes seems very natural, 
it appears not to have been considered in detail before,
although the version for finite simplicial complexes
has been proposed~\cite{MSS:SMSA} for 
use in social aggregation modelling,
and for $d$-dimensional simplicial complexes for fixed $d$ 
this can be seen as a case of Koponen's 
\emph{uniformly $(\mathbf{C}_0,\ldots,\mathbf{C}_{d+1})$-conditional
probability measure}~\cite[Definition~6.2]{Kop:APEPRlCS}.

We now work towards a formalisation of this for local classes in general.
Let $\K=\Mod(\Phi_\K)$ be a \genskelish class for a locally finite relational
signature $\Sigma$. 
We continue in our assumption that
each member $A$ of $\K$ has underlying set $|A|\subset\N$
and $\K$ is closed under isomorphism of such structures; 
recall also that
for any structure $S$ we denote by $\f{S}{k}$ the $k$-frame of $S$.
\begin{defn}
For $i\in\N$, we denote by 
$\K^\infty$ the set of $\Sigma$-structures $M$
with underlying set $\N$ such that $M\sat\Phi_\K$.
Similarly, we denote by
$\K^i$ the set of members of $\K$ with underlying set $\{0,\ldots,i-1\}$, 
we let $\K^{\subset i}$ denote the set of members of $\K$ with underlying set
a subset of $\{0,\ldots,i-1\}$, 
and for notational convenience we set $\K^{\subset\infty}=\K$.
\end{defn}
The last part of this definition is apropos because
we have the following equivalent characterisation of $\K^\infty$.
\begin{lemma}
Let $M$ be a $\Sigma$-structure with $|M|=\N$. Then
$M$ is in $\K^\infty$ if and only if
every finite substructure of $M$ is in $\K$.
\end{lemma}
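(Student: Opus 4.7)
The plan is to observe that the statement amounts to the standard fact that universal ($\Pi_1$) sentences are both preserved under substructures and reflected by the collection of all finite substructures, together with a quick bookkeeping check that our conventions about underlying sets cooperate. Recall that $\K = \Mod(\Phi_\K)$ for a set $\Phi_\K$ of local sentences containing $\PhiGI{\Sigma}$, and every local sentence has the shape $\forall x_1\cdots\forall x_m(R(x_1,\ldots,x_m)\implies\psi)$ with $\psi$ quantifier-free; in particular every sentence in $\Phi_\K$ is logically equivalent to a universal closure of a quantifier-free formula.

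For the forward direction, suppose $M\in\K^\infty$, so $M\sat\Phi_\K$, and let $N$ be a finite substructure of $M$. Since universal sentences are preserved under taking substructures, $N\sat\Phi_\K$. The underlying set $|N|$ is a finite subset of $|M|=\N$, and by the standing convention $\K$ is closed under isomorphism of structures whose underlying set is a subset of $\N$, so $N\in\K$.

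For the converse, suppose every finite substructure of $M$ is in $\K$, and fix any $\phi\in\Phi_\K$; write $\phi\equiv\forall x_1\cdots\forall x_m\,\theta(x_1,\ldots,x_m)$ with $\theta$ quantifier-free. To check $M\sat\phi$, pick an arbitrary tuple $(a_1,\ldots,a_m)$ from $M$. The truth value of $\theta(a_1,\ldots,a_m)$ is a Boolean combination of atomic statements involving only the elements $a_1,\ldots,a_m$, hence is determined by the induced substructure $N=M\restr\{a_1,\ldots,a_m\}$. By hypothesis $N\in\K$, so $N\sat\phi$, and in particular $N\sat\theta(a_1,\ldots,a_m)$; since $\theta$ is quantifier-free, this is equivalent to $M\sat\theta(a_1,\ldots,a_m)$. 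As the tuple was arbitrary, $M\sat\phi$, and this holds for every $\phi\in\Phi_\K$, so $M\in\K^\infty$.

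There is no real obstacle here: both directions rest only on the universal form of local sentences and the fact that quantifier-free formulas are absolute between a substructure and its parent. The only subtlety worth mentioning is that the convention ``$\K$ consists of finite structures with underlying set a subset of $\N$, closed under isomorphism of such'' is needed to upgrade ``$N\sat\Phi_\K$'' to ``$N\in\K$'' in the forward direction.
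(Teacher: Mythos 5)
Your proof is correct and takes essentially the same approach as the paper, whose entire proof reads ``This is immediate from the fact that local sentences are only universally quantified.'' You have simply spelled out the two standard facts implicit in that remark---preservation of universal sentences under induced substructures, and absoluteness of quantifier-free formulas between a structure and its substructures---so there is nothing to add.
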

\begin{proof}
This is immediate from the fact that local sentences are only universally 
quantified.
\end{proof}

For $i \in \N \cup \{\infty\}$ 
we define a topology on $\K^i$ having
as a base of open sets the collection of sets of the form
\[
O_i(A,k)=\{S\in\K^i\st\f{(S\restr|A|)}{k}=\f{A}{k}\}
\]
for $A\in\K^{\subset i}$ and $k\in\N$.

If $i$ is a natural number, then the topology on $\K^i$ defined by 
this base is discrete by Lemma~\ref{Onnnframe}.  Moreover, there 
is a natural map $\K^i \to \K^{i-1}$ restricting a structure on 
$\{0 , \ldots , i-1 \}$ to the induced substructure on 
$\{0 , \ldots , i-2\}$.  The space $\K^\infty$ is the inverse 
limit along these maps 
of the spaces $\K^i$ as $i$ varies, and thus is profinite.

Note that 
$\K^\infty$ with the above
topology is compact: 
indeed, it can be viewed as a closed subset of the
compact (using Lemma~\ref{fincardn}) Hausdorff space
$\prod_{i\in\N}\K^i$.
Furthermore, every open cover of $\K^\infty$ 
has a finite subcover with a finite \emph{disjoint} refinement. 
This allows us to simply check finite additivity in order to apply 
Carath\'eodory's Extension Theorem.

Let $N(S,k)$ denote the number 
of distinct $(k+1)$-frames 
of structures $T\in\K$ on the underlying set $|S|$ of $S$
such that $\f{T}{k}=\f{S}{k}$.

\begin{defn}\label{muBasis}
For all $i\in\N\cup\{\infty\}$, define $\mu_i$ on the base sets
$O_i(A,k)$ $(A\in\K^{\subset i})$ 
recursively in $k$ by setting $\mu_i(O_i(A,0))=1$ and
\[
\mu_i(O_i(A,k+1))
=\frac{1}{N(A,k)}\,\mu_i(O_i(A,k)).
\]
\end{defn}
Observe that if $i\leq j$ and $A\in\K^{\subset i}\subset\K^{\subset j}$,
then $\mu_i(O_i(A,k))=\mu_j(O_j(A,k))$, and that 
for all $A\in\Ks{i}$ and $k\in\N$, $\mu_i(O_i(A,k))\neq0$.

There are two parameters to our base sets: the substructure $A$ and the
frame level $k$.  Definition~\ref{muBasis} 
is such that $\mu_i$ is clearly additive as
one changes $k$, but we need to check that it is also additive for varying $A$.
We introduce some notation to help with the proof of this fact.
\begin{defn}
For $X$ a finite subset of $\N$, 
$A$ an $\calL$-structure with underlying set 
$|A|\subset X$, and $k\leq l$ natural numbers such that
$k\leq\|A\|$ and $l\leq\|X\|$, define
\[
\K^{X,l}_{A,k}=\{\f{B}{l}\st B\in\K\land|B|=X\land
\f{B}{k}\restr|A|=\f{A}{k}\}.
\]
\end{defn}
For example, $N(S,k)=\|\K^{|S|,k+1}_{S,k}\|$.
\begin{lemma}\label{uniformity}
Suppose $S\in\K$, $A$ is a substructure of $S$, and 
$1\leq k\leq\|A\|$.
Then for all $A'\in\K^{|A|,k}_{A,k-1}$,
\[
\left\|\K^{|S|,k}_{S,k-1}\cap\K^{|S|,k}_{A',k}\right\|=
\left\|\K^{|S|,k}_{S,k-1}\cap\K^{|S|,k}_{A,k}\right\|.
\]
In particular, 
\[
\left\|\K^{|S|,k}_{S,k-1}\right\|=
\left\|\K^{|A|,k}_{A,k-1}\right\|
\left\|\K^{|S|,k}_{S,k-1}\cap\K^{|S|,k}_{A,k}\right\|=
N(A,k-1)
\left\|\K^{|S|,k}_{S,k-1}\cap\K^{|S|,k}_{A,k}\right\|.
\]
\end{lemma}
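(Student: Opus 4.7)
The plan is to prove the first displayed equality by constructing an explicit bijection between $\K^{|S|,k}_{S,k-1}\cap\K^{|S|,k}_{A,k}$ and $\K^{|S|,k}_{S,k-1}\cap\K^{|S|,k}_{A',k}$, using the strengthened Adoptive Property (Lemma~\ref{strAdP}) to swap the $k$-ary relations on tuples entirely inside $|A|$ from those of $\f{A}{k}$ to those of $A'$. The second display then follows by partitioning $\K^{|S|,k}_{S,k-1}$ according to the restriction of each $k$-frame to $|A|$: every such restriction lies in $\K^{|A|,k}_{A,k-1}$, and the fibres of the resulting map are precisely the sets $\K^{|S|,k}_{S,k-1}\cap\K^{|S|,k}_{A',k}$ as $A'$ varies.

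Concretely, given $T\in\K^{|S|,k}_{S,k-1}\cap\K^{|S|,k}_{A,k}$, first note that $T$ is itself a $k$-frame of some member of $\K$, and hence belongs to $\K$ by Lemma~\ref{FramesinK}. Its substructure on $|A|$ equals $\f{A}{k}$, whose $(k-1)$-frame is $\f{A}{k-1}$, matching that of $A'$ by the definition of $\K^{|A|,k}_{A,k-1}$. I therefore apply Lemma~\ref{strAdP} with $B:=T$, with $T\restr|A|$ in the role of $A$, with $A'$ in the role of $A'$, and with $n:=k$ (noting that $T\restr|A|$ has cardinality $\|A\|\geq k$ by hypothesis), obtaining a $T'\in\K$ with $|T'|=|S|$, $\f{T'}{k-1}=\f{T}{k-1}=\f{S}{k-1}$, and $R^{T'}=(R^T\setminus|A|^k)\cup R^{A'}$ for each $k$-ary relation symbol $R$. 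Since $T$ carries no relations of arity above $k$, neither does $T'$, so $T'$ equals its own $k$-frame and lies in $\K^{|S|,k}_{S,k-1}\cap\K^{|S|,k}_{A',k}$. Reversing the roles of $\f{A}{k}$ and $A'$ supplies the inverse map, establishing the bijection.

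For the factorization identity, observe that for any $T\in\K^{|S|,k}_{S,k-1}$ the restriction $T\restr|A|$ is a $k$-frame on $|A|$ whose $(k-1)$-frame is $\f{S}{k-1}\restr|A|=\f{A}{k-1}$ (since $A$ is a substructure of $S$), and hence belongs to $\K^{|A|,k}_{A,k-1}$. This partitions $\K^{|S|,k}_{S,k-1}$ into $\|\K^{|A|,k}_{A,k-1}\|=N(A,k-1)$ cells, each of the form $\K^{|S|,k}_{S,k-1}\cap\K^{|S|,k}_{A',k}$, which by the first part all share the cardinality $\|\K^{|S|,k}_{S,k-1}\cap\K^{|S|,k}_{A,k}\|$; summing then gives the claimed factorization. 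The only delicate point in the argument is verifying that the swap supplied by Lemma~\ref{strAdP} really leaves $\f{T}{k-1}$ and all $k$-ary relations on tuples not wholly contained in $|A|$ undisturbed while still producing a structure in $\K$, but this is exactly the content of the Adoptive Property and is precisely why it was formulated in a frame-by-frame manner.
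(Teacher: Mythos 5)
Your proof is correct and takes essentially the same route as the paper, whose entire proof is the one-line observation that Lemma~\ref{strAdP} provides the natural bijection between $\K^{|S|,k}_{S,k-1}\cap\K^{|S|,k}_{A',k}$ and $\K^{|S|,k}_{S,k-1}\cap\K^{|S|,k}_{A,k}$; your partition argument for the ``in particular'' clause is exactly what the paper leaves implicit. One small repair: Lemma~\ref{strAdP} places no constraint on the relations of arity above $k$ in the structure $T'$ it produces, so instead of asserting that $T'$ carries none, send $T$ to the $k$-frame $\f{(T')}{k}$, which lies in $\K$ by Lemma~\ref{FramesinK}, is uniquely determined by the data, and restricts on $|A|$ to $A'$, so the map is well-defined into the correct set.
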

\begin{proof}
Lemma~\ref{strAdP} provides a natural bijection between
$\K^{|S|,k}_{S,k-1}\cap\K^{|S|,k}_{A',k}$ and
$\K^{|S|,k}_{S,k-1}\cap\K^{|S|,k}_{A,k}$.
\end{proof}

\begin{lemma}
Suppose $A\in\K^{\subset i}$, 
$0\leq k\leq\|A\|$, and 
$X$ is a finite subset of $\{j\in\N\st j<i\}$ such that $|A|\subset X$.
Then
\[
\mu_i(O_i(A,k))=\sum_{S\in\K^{X,k}_{A,k}}\mu_i(O_i(S,k))
\]
\end{lemma}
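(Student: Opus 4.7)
The plan is to induct on $k$.

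Base case $k = 0$: a $0$-frame records no nontrivial relations, so $\K^{X,0}_{A,0}$ is a singleton $\{S\}$ with $O_i(S,0) = O_i(A,0)$; both sides reduce to $1$.

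Inductive step: Using Definition~\ref{muBasis} and the inductive hypothesis, the left side equals
\[
\mu_i(O_i(A,k+1))
= \frac{1}{N(A,k)}\,\mu_i(O_i(A,k))
= \frac{1}{N(A,k)} \sum_{S \in \K^{X,k}_{A,k}} \mu_i(O_i(S,k)).
\]
For the right side, reorganise the sum by grouping each $T \in \K^{X,k+1}_{A,k+1}$ according to its $k$-frame $S := \f{T}{k}$. Since the $k$-frame operation commutes with restriction to $|A|$, the defining condition on $T$ forces $S \in \K^{X,k}_{A,k}$. For a fixed such $S$ and any $T$ in the inner sum, $\f{T}{k} = S$ gives $N(T,k) = N(S,k)$ and $O_i(T,k) = O_i(S,k)$, whence by Definition~\ref{muBasis}
\[
\mu_i(O_i(T, k+1)) = \frac{1}{N(S,k)}\,\mu_i(O_i(S,k)).
\]

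The crux --- and the main potential obstacle --- is the counting step: for each $S \in \K^{X,k}_{A,k}$, I claim the number of $T \in \K^{X,k+1}_{A,k+1}$ with $\f{T}{k} = S$ is exactly $N(S,k)/N(A,k)$. Pick any $B \in \K$ on $X$ with $\f{B}{k} = S$; the corollary of Lemma~\ref{uniformity} applied to this $B$ with the genuine substructure $B \restr |A|$ (whose $k$-frame still equals $\f{A}{k}$), combined with the uniformity statement to swap this substructure for $A$ itself (permissible since the two have the same $k$-frame), yields
\[
\left\| \K^{X,k+1}_{B,k} \cap \K^{X,k+1}_{A,k+1} \right\|
= \frac{\|\K^{X,k+1}_{B,k}\|}{N(A,k)}
= \frac{N(S,k)}{N(A,k)},
\]
and the intersection on the left is precisely the set of $T$'s of interest. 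Multiplying this count by the per-$T$ contribution $\frac{1}{N(S,k)}\mu_i(O_i(S,k))$ cancels the factors of $N(S,k)$, leaving $\frac{1}{N(A,k)}\mu_i(O_i(S,k))$ per outer summand. Summing over $S \in \K^{X,k}_{A,k}$ reproduces the expression for $\mu_i(O_i(A,k+1))$ computed on the left, completing the induction.
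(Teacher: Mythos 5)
Your proof is correct and takes essentially the same approach as the paper's: both argue by induction on $k$, use the recursion of Definition~\ref{muBasis}, and invoke Lemma~\ref{uniformity} to show that each $S\in\K^{X,k}_{A,k}$ has exactly $N(S,k)/N(A,k)$ extensions lying in $\K^{X,k+1}_{A,k+1}$. The difference is purely presentational --- the paper multiplies the inductive hypothesis through by $N(A,k-1)$ and then refines the sum, whereas you group the level-$(k+1)$ sum by $k$-frames; your explicit passage from $B\restr|A|$ to $A$ via the first part of Lemma~\ref{uniformity} just spells out a step the paper leaves implicit.
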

\begin{proof}
The proof is by induction on $k$, using the Adoptive Property
in the form of Lemma~\ref{uniformity}.
For $k=0$ the statement holds because there is a unique $0$-frame on
any given underlying set: the structure with the empty interpretation for
every relation.

So suppose the statement is true for values up to and including $k-1$.
Hence,
\begin{align*}
\mu_i(O_i(A,k-1))&=\sum_{S\in\K^{X,k-1}_{A,k-1}}\mu_i(O_i(S,k-1))\\
N(A,k-1)\,\mu_i(O_i(A,k))&=
\sum_{S\in\K^{X,k-1}_{A,k-1}}N(S,k-1)\,\mu_i(O_i(\bar S,k)),
\end{align*}
where for every $S\in\K^{X,k-1}_{A,k-1}$, $\bar S$ is a $k$-frame of the
form $\fr{k}{B}$ for some $B\in\K$ with $\fr{k-1}{B}=S$;
by Definition~\ref{muBasis}, the expression is independent of the choice of
$\bar S$.
We have
\begin{align*}
N(A,k-1)\,\mu_i(O_i(A,k))&=
\sum_{S\in\K^{X,k-1}_{A,k-1}}\left\|\K^{X,k}_{S,k-1}\right\|
\mu_i(O_i(\bar S,k))\\
\mu_i(O_i(A,k))&=
\sum_{S\in\K^{X,k-1}_{A,k-1}}
\left\|\K^{X,k}_{S,k-1}\cap\K^{X,k}_{A,k}\right\|
\mu_i(O_i(\bar S,k))\\
&=\sum_{\bar{S}\in\K^{X,k}_{A,k}}\mu(O_i(\bar{S},k)).
\end{align*}
\end{proof}

This result easily yields finite additivity for $\mu_i$ on the
ring of sets generated by the 
base sets $O_i(A,k)$.  
Since all of the sets in this ring are clopen, this is equivalent to
$\sigma$-additivity, and so applying the Carath\'eodory Extension Theorem
in the $i=\infty$ case
we may make the following definition.

\begin{defn}
For all $i\in\N\cup\{\infty\}$, the \emph{frame-wise uniform measure}
$\mu_i$ on $\K^i$ is the probability measure induced by 
Definition~\ref{muBasis}.
\end{defn}

The Adoptive Property tells us that whether a certain $n$-frame can occur
on a given subset $X$ of a structure depends only on the $(n-1)$-frame on
$X$, and is independent of ``what happens elsewhere''.  
With the frame-wise uniform measure, we also have this independence in the
probability theory sense of the word.  
The following Lemma is indicative of this.

\begin{lemma}\label{indep}
For and $B\in\K$ and any $k<\card{B}$,
\[
N(B,k)=\prod_{X\subset|B|,\|X\|=k+1}N(B\restr X,k).
\]
\end{lemma}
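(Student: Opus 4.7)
The plan is to exhibit a bijection between the set $\mathcal{F}$ of $(k+1)$-frames of structures in $\K$ on $|B|$ extending $\f{B}{k}$, and the product $\prod_X \mathcal{F}_X$ taken over $(k+1)$-element subsets $X\subset|B|$, where $\mathcal{F}_X$ is the set of $(k+1)$-frames of structures in $\K$ on $X$ extending $\f{B\restr X}{k}$. Since $\card{\mathcal{F}}=N(B,k)$ and $\card{\mathcal{F}_X}=N(B\restr X,k)$, such a bijection directly yields the required identity.

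The map $\Phi\colon\mathcal{F}\to\prod_X\mathcal{F}_X$ sending $F$ to its tuple of restrictions $(F\restr X)_X$ is well-defined by the Hereditary Property. Injectivity follows from General Irreflexivity: in any $F\in\mathcal{F}$, relations of arity at most $k$ are determined by $\f{B}{k}$, relations of arity greater than $k+1$ are empty by the definition of $(k+1)$-frame, and any $(k+1)$-ary relation holds only on tuples of pairwise distinct entries, whose entries span a unique $(k+1)$-element subset of $|B|$.

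For surjectivity I would enumerate the $(k+1)$-element subsets of $|B|$ as $X_1,\ldots,X_r$ and, given $(F_X)_X\in\prod_X\mathcal{F}_X$, construct a sequence $B=B_0,B_1,\ldots,B_r\in\K$ by iterating the Adoptive Property with $n=k+1$. At step $i$, pick a witness $A_i'\in\K$ on $X_i$ with $\f{A_i'}{k+1}=F_{X_i}$ and $\f{A_i'}{k}=\f{B}{k}\restr X_i$ (such $A_i'$ exists by the definition of $\mathcal{F}_{X_i}$), and apply AdP to $B_{i-1}$, its substructure $B_{i-1}\restr X_i$ (which has the required $k$-frame $\f{B}{k}\restr X_i$ by induction) and $A_i'$, obtaining $B_i\in\K$ with $\f{B_i}{k}=\f{B}{k}$ and whose $(k+1)$-ary relations match $F_{X_i}$ on $X_i^{k+1}$ and are unchanged elsewhere.

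The main thing to check, and the place where one might worry, is that successive modifications do not disturb one another. This reduces to a simple combinatorial observation: if $i\neq j$ with $\card{X_i}=\card{X_j}=k+1$, then any tuple of pairwise distinct entries from $X_i$ exhausts all of $X_i$, so contains an element outside $X_j$, and hence modifying $(k+1)$-ary relations on $X_j^{k+1}$ leaves those on such tuples of $X_i$ untouched. Thus $\f{B_r}{k+1}\restr X_i=F_{X_i}$ for every $i$, giving $\Phi(\f{B_r}{k+1})=(F_X)_X$, so $\Phi$ is a bijection and the identity follows.
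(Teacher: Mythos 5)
Your proof is correct and is essentially the paper's own argument, written out in full: the paper's proof sketch observes that a $(k+1)$-frame is determined by its restrictions to $(k+1)$-element subsets (your injectivity via General Irreflexivity), that the Hereditary Property makes the product an upper bound (your well-definedness of $\Phi$), and that the Adoptive Property realises every combination (your iterated-AdP surjectivity, in the style of Lemma~\ref{strAdP}). Your explicit check that modifications on distinct $(k+1)$-sets cannot interfere is exactly the point the paper leaves implicit.
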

\begin{proof}
The $(k+1)$-frame of a structure is of course determined by the $(k+1)$-frame
of each of its $(k+1)$-element substructures.
By the Hereditary Property, the right hand side of the equation is therefore
the maximum possible value for $N(B,k)$, and the Adoptive Property
shows that indeed every possibility 
occurs in $\K$.
\end{proof}

\begin{lemma}\label{usefulindep}
Suppose $A, B\in\K$, $k\leq\|A\|$, $l\leq\|B\|$, let
$Y=|A|\cap|B|$ and \(m=\min(l,\|Y\|),\)
and suppose
$k\geq m$ and 
$\fr{m}{(A\restr Y)}
=\fr{m}{\mbox{$(B\restr Y)$}}$.
Then
\[
\frac{\mu_\infty(O_\infty(A,k))}{\mu_\infty(
O_\infty(A\restr Y,m))}=
\frac{\mu_\infty(O_\infty(A,k)\cap O_\infty(B,l))}{\mu_\infty(O_\infty(B,l))}.
\]
More generally,
suppose further that $C_i\in\K$ and $n_i\in\N$ for $i$ in a finite set $I$
are such that $n_i\leq\|C_i\|$,
$|A|\cap|C_i|=\emptyset$ for all $i\in I$,
\[
\fr{\min(l,n_i,\||B|\cap|C_i|\|)}{(B\restr|B|\cap|C_i|)}
=\fr{\min(l,n_i,\||B|\cap|C_i|\|)}{(C_i\restr|B|\cap|C_i|)}
\]
for all $i\in I$, and
\[
\fr{\min(l,n_i,\||C_i|\cap|C_j|\|)}{(C_i\restr|C_i|\cap|C_j|)}
=\fr{\min(l,n_i,\||C_i|\cap|C_j|\|)}{(C_j\restr|C_i|\cap|C_j|)}
\]
for all $i\in I$.
Let $N$ be sufficiently large that $A, B$ and all $C_i$ are in $\K^{\subset N}$.
Then 
\[
\frac{\mu_N(O_N(A,k))}{\mu_N(
O_N(A\restr Y,m))}=
\frac{\mu_N(O_N(A,k)\cap O_N(B,l)
\cap\bigcap_{i\in I} O_N(C_i,n_i))}{\mu_N(O_N(B,l)
\cap\bigcap_{i\in I} O_N(C_i,n_i))}.
\]
\end{lemma}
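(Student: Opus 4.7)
The plan is to express every measure in the claim as a product indexed by the nonempty subsets $X$ of the relevant vertex sets, and then verify the stated equality by cancellation.

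First, by iterating Definition~\ref{muBasis} and expanding each $N(A,j)$ via Lemma~\ref{indep}, we obtain
\[
\mu_N(O_N(A,k)) = \prod_{j=0}^{k-1}\frac{1}{N(A,j)} = \prod_{X \subset |A|,\ 1 \leq \|X\| \leq k} \frac{1}{N(A\restr X,\ \|X\|-1)},
\]
and analogous formulas hold for the other base sets appearing in the statement. Extending to intersections: given any finite family $\{(S_i, k_i)\}$ whose specified frames agree pairwise on their overlaps (as encoded by the various frame-equality hypotheses of the lemma), set $\mathcal{F}_i = \{X \subset |S_i| : 1 \leq \|X\| \leq k_i\}$, $\mathcal{F} = \bigcup_i \mathcal{F}_i$, and let $D_X = S_i\restr X$ for any $i$ with $X \in \mathcal{F}_i$. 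Since $\mathcal{F}$ is closed under taking nonempty subsets, ordering $X \in \mathcal{F}$ by cardinality and applying the chain rule for conditional probability together with the uniform-independent structure from Lemma~\ref{indep} gives
\[
\mu_N\Bigl(\bigcap_i O_N(S_i, k_i)\Bigr) = \prod_{X \in \mathcal{F}} \frac{1}{N(D_X,\ \|X\|-1)}.
\]

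Armed with this formula, both ratios in the claim become quotients of such products. On the right-hand side, the factors indexed by $X \in \mathcal{F}_B \cup \bigcup_i \mathcal{F}_{C_i}$ cancel, and because $|A| \cap |C_i| = \emptyset$ forces $\mathcal{F}_A \cap \mathcal{F}_{C_i} = \emptyset$, what survives is a product over $\mathcal{F}_A \setminus \mathcal{F}_B$. The left-hand side similarly reduces to a product over $\mathcal{F}_A \setminus \mathcal{F}_{A\restr Y}$. A direct set-theoretic check using $k \geq m$ shows that these two index sets agree: an $X \in \mathcal{F}_A$ lies in $\mathcal{F}_B$ iff $X \subset Y$ and $\|X\| \leq \min(k,l)$, and since $X \subset Y$ automatically forces $\|X\| \leq \|Y\|$, the bound $\min(k,l,\|Y\|)$ coincides with $\min(l,\|Y\|) = m$ precisely because $k \geq m$, so the condition reduces to $X \in \mathcal{F}_{A\restr Y}$. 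The claimed equality follows.

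The main obstacle will be justifying the chain-rule step in the intersection formula: one must verify that the various pairwise compatibility hypotheses are exactly enough to make the $(\|X\|-1)$-frame on each $X \in \mathcal{F}$ well-defined from the constraints on its proper subsets, independent of which $S_i$-constraint is consulted, so that $D_X$ and $N(D_X,\|X\|-1)$ are unambiguous. The various $\min$ expressions in the hypotheses are calibrated for this, but confirming it requires a somewhat fiddly case analysis over all pairs of sources $(S_i, S_j)$ to check that whenever $X' \in \mathcal{F}_i \cap \mathcal{F}_j$, the frame restrictions $S_i \restr X'$ and $S_j \restr X'$ actually coincide.
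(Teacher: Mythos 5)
Your proposal is correct, and it takes a genuinely different route from the paper's. The paper's entire proof is ``clear by induction on $k$, starting from $0$ if $|A|\smallsetminus Y$ is nonempty and $m$ otherwise'': it increments the frame level on $A$ and matches, at each step, the factor $1/N(A,k)$ acquired by both sides of the identity. You instead derive the closed form $\mu_N(O_N(A,k))=\prod_{X\subset|A|,\,1\leq\card{X}\leq k}1/N(A\restr X,\card{X}-1)$, extend it to finite intersections of basic sets, and finish by cancelling index sets. Both arguments rest on the same two pillars --- the recursion of Definition~\ref{muBasis} and the factorisation in Lemma~\ref{indep}, i.e.\ ultimately the Adoptive Property --- but yours buys an explicit formula for the measure of any compatible finite intersection of basic open sets, which in particular shows that all denominators in the statement are nonzero (a point the paper never addresses), while the paper's induction avoids the global bookkeeping at the cost of leaving every step implicit. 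Your final cancellation, including the observations that $k\geq m$ forces $\mathcal{F}_A\cap\mathcal{F}_B=\mathcal{F}_{A\restr Y}$ and that $|A|\cap|C_i|=\emptyset$ kills all other overlaps with $\mathcal{F}_A$, is exactly right.

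Two comments on where the remaining work actually sits. The step you single out as the main obstacle --- well-definedness of $D_X$ --- is in fact the routine part: if $X\in\mathcal{F}_i\cap\mathcal{F}_j$ then $X\subset|S_i|\cap|S_j|$ and $\card{X}\leq\min(k_i,k_j,\card{|S_i|\cap|S_j|})$, so the pairwise agreement hypotheses give $\fr{\card{X}}{(S_i\restr X)}=\fr{\card{X}}{(S_j\restr X)}$, and since by General Irreflexivity a structure on $X$ equals its $\card{X}$-frame (Lemma~\ref{Onnnframe}), in fact $S_i\restr X=S_j\restr X$. The step deserving real care is the one you call the chain rule: $\mu_N$ is produced by Carath\'eodory extension from the basic sets, so conditional probabilities are not available off the shelf. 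The clean justification of your intersection formula is to write $\bigcap_i O_N(S_i,k_i)$ as the disjoint union of the basic sets $O_N(T,K)$, where $K=\max_i k_i$ and $T$ ranges over the $K$-frames of members of $\K$ on $\bigcup_i|S_i|$ satisfying $T\restr X=D_X$ for every $X\in\mathcal{F}$; each summand is evaluated by your single-structure formula, the factors indexed by $X\in\mathcal{F}$ are constant across the sum, and the remaining factors sum to $1$ because, level by level, Lemma~\ref{indep} (whose proof is exactly an appeal to the Adoptive Property) says that every combination of local choices on the sets $X\notin\mathcal{F}$ is realised by exactly one admissible frame at the next level. This argument simultaneously shows the intersection is nonempty, which is what makes the ratios on the right-hand sides of the lemma well defined.
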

\begin{proof}
This is clear by induction on $k$, starting from $0$ if 
$|A|\smallsetminus Y$ is nonempty and $m$ otherwise.
\end{proof}

Erd\H{o}s and Renyi~\cite{ErR:AsG}
showed that almost every countably infinite graph is isomorphic to the
infinite random graph.  
With our frame-wise uniform measure $\mu_\infty$ we have
the analogous result for members of $\K^\infty$.

\begin{thm}\label{aaR}
Under the frame-wise uniform measure, 
almost every structure in $\K^\infty$
is isomorphic to the \Fraisse limit $\R$ of $\K$.  
That is,
\[
\mu_\infty(\{S\in\K^\infty\st S\cong\R\})=1.
\]
\end{thm}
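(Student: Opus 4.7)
The plan is to reduce the statement to showing that each extension axiom $\phi_B$ from the proof of Theorem~\ref{RecCatQE}(i) holds $\mu_\infty$-almost surely. Since $\Phi_\R = \Phi_\K \cup \{\phi_B : B \in \K, |B| = \{1,\ldots,n\}\}$ provides a complete axiomatisation of $\mathit{Th}_\R$ and any model of $\Phi_\R$ is isomorphic to $\R$, and since every $S \in \K^\infty$ already satisfies $\Phi_\K$ by definition, it suffices to show $\mu_\infty(\phi_B) = 1$ for each extension axiom. The collection of such $\phi_B$ is countable, because for each $n$ there are only finitely many structures in $\K$ with underlying set $\{1,\ldots,n\}$ by Lemma~\ref{fincardn}, so a countable intersection of measure-one events will give the result.

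Next I would fix $A \hookrightarrow B$ with $|B| = \{1,\ldots,n\}$, $|A| = \{1,\ldots,n-1\}$. Using countable subadditivity,
\[
\mu_\infty(\lnot\phi_B) \leq \sum_{X} \mu_\infty\bigl(\,X \text{ realises } A \text{ and no } v \in \N\setminus X \text{ extends it to a copy of } B\,\bigr),
\]
where $X$ ranges over the countably many $(n-1)$-element subsets of $\N$. For a fixed $X$, enumerate $\N\setminus X = \{v_0, v_1, \ldots\}$, and for each $i$ let $E_i$ be the event that the induced substructure on $X \cup \{v_i\}$ is isomorphic to $B$ via the identity on $X$ sending $v_i$ to $n$. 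Each $E_i$ is determined by the frame of $S$ on $X \cup \{v_i\}$ of some finite level, so it is a finite union of basic clopen sets on which $\mu_\infty$ takes a strictly positive value; moreover, conditional on $X$ realising $A$, each $E_i$ has the same positive conditional probability $p > 0$, which is independent of $i$ by the construction of $\mu_\infty$ in Definition~\ref{muBasis}.

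The main technical step is to upgrade these positive probabilities into the statement that the $E_i$ are conditionally independent given the event $\{X$ realises $A\}$. This is exactly what Lemma~\ref{usefulindep} is designed to supply: the events $E_i$ involve disjoint additional vertices $v_i$, and the compatibility of frames on overlaps is automatic because the overlaps lie inside $X$ where all frames agree. Applying the lemma iteratively (working inside some $\K^{\subset N}$ and then passing to the limit) yields, for any finite $I$,
\[
\mu_\infty\Bigl(\bigcap_{i \in I} \lnot E_i \;\Big|\; X \text{ realises } A\Bigr) = (1-p)^{|I|}.
\]
Letting $|I| \to \infty$ shows that conditional on $X$ realising $A$, almost surely some $E_i$ holds, so each summand above is zero.

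Combining, $\mu_\infty(\phi_B) = 1$ for each extension axiom, and intersecting over the countably many $\phi_B$ together with $\Phi_\K$ gives $\mu_\infty(\{S \in \K^\infty : S \sat \Phi_\R\}) = 1$. By Theorem~\ref{RecCatQE}(i), every such $S$ is isomorphic to $\R$, completing the proof. The principal obstacle is the careful bookkeeping in the independence argument: verifying that the frame-compatibility hypotheses of Lemma~\ref{usefulindep} really do hold when one successively conditions on the events $E_i$, and that the constant $p$ is the same for each $i$ (this last point relies on Definition~\ref{muBasis} being defined purely in terms of frame counts $N(\cdot, k)$, which are isomorphism invariants).
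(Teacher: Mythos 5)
Your proposal is correct and follows essentially the same route as the paper's proof: reduce via the countable categoricity of Theorem~\ref{RecCatQE} to showing each extension axiom $\phi_B$ holds almost surely, decompose by $\sigma$-additivity over the countably many potential copies of $A$, and then use the uniform positive extension probability together with the independence supplied by Lemma~\ref{usefulindep} to conclude each failure event has measure zero. Your write-up merely makes explicit the $(1-p)^{|I|}\to 0$ computation that the paper leaves implicit.
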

\begin{proof}
The countable categoricity of $\Thr$ given by Theorem~\ref{RecCatQE}
tells us that 
a countable $\Sigma$-structure is isomorphic to $\R$
if and only if it satisfies the axioms $\Phi_\R$ for $\Thr$.
By definition the elements of $\K^\infty$
satisfy the axioms $\Phi_\K$, so
it suffices to show that almost every member of $\K^\infty$ satisfies
all of the extension axioms
$\phi_B$ as $B$ varies over members of $\K$.
There are only countably many such $B$, so since $\mu_\infty$ is 
countably additive, it suffices to show that
for each such $B$, 
\[
\mu_\infty(\{S\in\K^\infty\st S\nvDash\phi_B\})=0.
\]
So suppose $B$ is a member of $\K$ with underlying set $|B|=\{0,\ldots,n\}$, 
and $A$ is the substructure induced on $\{0,\ldots,n-1\}$.  
For each
$n$-tuple $\mathbf{j}=(j_0,\ldots,j_{n-1})\in\N^n$ with distinct elements,
let $\K^\infty_{A,\mathbf{j}}$ denote 
the set of members of $\K^\infty$ into which 
$A$ embeds by the map $i\mapsto j_i$, that is,
in the notation of Theorem~\ref{RecCatQE},
\[
\K^\infty_{A,\mathbf{j}}=\Big\{S\in\K^\infty\,\Big|\,
\bigwedge_{m=1}^{n-1}\bigwedge_{k=1}^{k_m}
\bigwedge_{0\leq i_1,\ldots,i_m\leq n-1}
\bar{R}_{m,k}^{A:i_1,\ldots,i_m}(j_{i_1},\ldots,j_{i_m})\Big\}.
\]
By $\sigma$-additivity again, it suffices to show that for every 
$\mathbf{j}=(j_0,\ldots,j_{n-1})\in\N^n$ with distinct elements,
\begin{multline*}
\mu_\infty\Bigg(\bigg\{S\in\K^\infty_{A,\mathbf{j}}\st 
\forall x_n
\Big(
\bigvee_{i=0}^{n-1}j_i=x_n\ \lor\\
\neg
\bigwedge_{d=0}^{n}\bigwedge_{i_0<\cdots<i_{d-1}<n}
\bar{R}^{B:i_0,\ldots,i_{d-1},n}_{d}(j_{i_0},\ldots,j_{i_{d-1}},x_n)
\Big)
\bigg\}\Bigg)=0,
\end{multline*}
that is, the probability that no one-point extension of the image of $A$ 
in the structure is isomorphic to $B$ is 0.
But now for members of $\K^\infty_{A,\mathbf{j}}$, the
probability that the substructure induced on $\mathbf{j}\cup\{j_n\}$ 
is isomorphic to $B$
for a given $j_n$ not in $\mathbf{j}$ is non-zero, and takes the same
value for all such $j_n$. 
Moreover, for a given $j_n$ it is
independent of whether it holds for
other elements of $\N$ not in $\mathbf{j}$, by Lemma~\ref{usefulindep}.
Hence, the above measure is indeed 0, and we are done.
\end{proof}

\subsection{\Zo}

In this section we show that, using the frame-wise uniform measure, there is a
\zo for arbitrary simplicial complexes, and likewise for the members of
any local class.
As mentioned above,
Blass and Harary~\cite{BlH:PAAGC} have shown that there is a \zo for
simplicial complexes of dimension at most some given bound $d$, 
using for each $n$ the usual uniform measure on the set of 
at most $d$-dimensional simplicial complexes on $n$ vertices
(that is, simply counting the simplicial complexes, with no
regard for their structure).  
The restriction on the dimension that they
impose is crucial for making sense of their {\zo} --- in their measure,
almost every simplicial complex has a complete $(d-1)$-skeleton.
The \zo we obtain below for the frame-wise uniform measure imposes
no such restriction, and indeed, in our \zo the probability that even the
underlying graph is complete converges to 0.  In fact, in the frame-wise 
uniform measure, the underlying graph converges to the countable random graph.

Interestingly, in obtaining their \zo Blass and Harary use 
axioms that generalise the extension axioms for the random graph, as
our axioms $\phi_B$ also do.  However, their generalisation is more
direct, involving a one-point extension in terms of two sets for which
simplices should or should not be added.  The simplicity of this generalisation
is what forces the $(d-1)$-skeleton to be complete, but of course it
turns out that this is the right thing to do for the uniform measure.
Our axioms $\phi_B$ reflect more of a \Fraisse limit perspective, 
with the result that the \zo that we obtain
is for the frame-wise uniform measure.
As such, our result is in a sense closer to the original \zos of
Glebskii, Kogan, Liogon'kii and Talanov~\cite{GKLT:RDR} and
Fagin~\cite{Fag:PFM} than it is to the \zo of Blass and Harary.

We continue to assume we have a fixed local class $\K$ over a locally finite
relational signature $\Sigma$ with \Fraisse limit $\R$,
and take $\mu_N$ to be the frame-wise uniform measure on $\K^N$ for 
$N\in\N\cup\{\infty\}$, as defined above.
\begin{thm}\label{01Law}
Let $\phi$ be a first-order sentence over $\Sigma$.
As $N$ goes to infinity, 
$\mu_{N}(\{S\in\K^N\st S\sat\phi\})$ converges,
with limiting value $1$ if $\phi$ holds of $\R$, and $0$ if not.
\end{thm}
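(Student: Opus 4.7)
The plan is to reduce the general statement to the case of the extension axioms $\phi_B$ constructed in the proof of Theorem~\ref{RecCatQE}. Recall from that proof that $\Thr$ is completely axiomatised by $\Phi_\R=\Phi_\K\cup\{\phi_B\st B\in\K,\ |B|=\{1,\ldots,n\},\ n\in\N\}$. Since every $S\in\K^N$ already satisfies $\Phi_\K$ and since $\Thr$ is complete, either $\phi$ or $\neg\phi$ is a logical consequence of a finite subset $\Phi_\K\cup\{\phi_{B_1},\ldots,\phi_{B_k}\}\subset\Phi_\R$. A union bound would then reduce the theorem to the claim that for every $B\in\K$, $\mu_N(\{S\in\K^N\st S\sat\phi_B\})\to 1$ as $N\to\infty$, since in that case $\mu_N(\phi)\geq 1-\sum_i(1-\mu_N(\phi_{B_i}))\to 1$ when $\R\sat\phi$, and the complementary case follows by taking negations.

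To prove this claim, fix $B\in\K$ with $|B|=\{1,\ldots,n\}$ and $A=B\restr\{1,\ldots,n-1\}$. For each tuple $\mathbf{j}=(j_1,\ldots,j_{n-1})$ of distinct elements of $\{0,\ldots,N-1\}$, let $E_\mathbf{j}$ denote the event that the map $i\mapsto j_i$ embeds $A$ into $S$, and for each $j_n\in\{0,\ldots,N-1\}\smallsetminus\mathbf{j}$ let $F_{\mathbf{j},j_n}$ denote the event that the induced substructure on $\mathbf{j}\cup\{j_n\}$ realises $B$ via $i\mapsto j_i$. Then $\{S\not\sat\phi_B\}\subseteq\bigcup_\mathbf{j}\bigl(E_\mathbf{j}\cap\bigcap_{j_n\notin\mathbf{j}}\neg F_{\mathbf{j},j_n}\bigr)$, and it suffices to bound the right-hand side under $\mu_N$.

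The two key ingredients are: \emph{(i)} a uniform positive value $p=p_{A,B}>0$ for $\mu_N(F_{\mathbf{j},j_n}\mid E_\mathbf{j})$, which follows from Definition~\ref{muBasis} together with Lemma~\ref{fincardn}, since the relevant conditional probability is a product of per-level ratios $1/N(\cdot,k)$ that depends only on the isomorphism types of $A$ and $B$; and \emph{(ii)} mutual independence of the events $\{F_{\mathbf{j},j_n}\}_{j_n\notin\mathbf{j}}$ conditional on $E_\mathbf{j}$, which is an application of Lemma~\ref{usefulindep}, since for distinct $j_n,j_n'$ the tuples supporting $F_{\mathbf{j},j_n}$ and $F_{\mathbf{j},j_n'}$ meet only in $\mathbf{j}$, where the structure is already fixed by $E_\mathbf{j}$. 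Combining (i) and (ii) yields $\mu_N(\bigcap_{j_n\notin\mathbf{j}}\neg F_{\mathbf{j},j_n}\mid E_\mathbf{j})\leq(1-p)^{N-n+1}$, whence
\[
\mu_N(\{S\not\sat\phi_B\})\leq\sum_\mathbf{j}\mu_N(E_\mathbf{j})(1-p)^{N-n+1}\leq N^{n-1}(1-p)^{N-n+1}\longrightarrow 0.
\]

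The main obstacle I anticipate is the rigorous verification of the independence in (ii). The frame-wise uniform measure couples relations of the same arity non-trivially across their domains, and one must present the events $F_{\mathbf{j},j_n}$ as base sets of the form $O_N(\cdot,\cdot)$ whose pairwise and mutual intersection data fit the compatibility hypotheses of Lemma~\ref{usefulindep}; that lemma is stated precisely for this scenario, but the bookkeeping requires care. A secondary and more routine point is to exhibit $p$ as a positive constant independent of $\mathbf{j}$ and $N$, which ultimately reduces to the permutation-invariance of $\mu_N$ together with Lemma~\ref{indep}.
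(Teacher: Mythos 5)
Your proposal is correct and follows essentially the same route as the paper's own proof: reduce to the extension axioms $\phi_B$ via completeness of $\Thr$ and finiteness of proofs, apply a union bound over the at most $N^{n-1}$ embeddings of $A$, and use Lemma~\ref{usefulindep} to obtain the conditional independence and the uniform positive $p$ giving the $(1-p)^{N-n+1}$ factor. The only difference is presentational (you phrase the estimate via conditional events $E_{\mathbf{j}}$, $F_{\mathbf{j},j_n}$, while the paper works with the formula $\psi_B$ and a symmetry argument), and the obstacle you flag --- casting the events as base sets fitting the hypotheses of Lemma~\ref{usefulindep} --- is exactly the step the paper delegates to that lemma.
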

\begin{proof}
Since $\Thr$ is complete and axiomatised by $\Phi_\R$ as in the proof of 
Theorem~\ref{RecCatQE}, 
every sentence $\psi$ in the language $\calL$ will be provable or disprovable
from $\Phi_\R$.  Of course any proof (of either $\psi$ or $\lnot\psi$)
will only involve finitely many axioms from $\Phi_\R$.
Therefore, if each axiom in $\Phi_\R$ holds in members of $\K^N$ with probability approaching
1, then every sentence $\psi$ true in $\R$
will hold with probability approaching 1, and the negations of such statements
will hold with probability approaching 0.
We thus restrict attention to sentences in $\Phi_\R$.

The idea of the proof is as for Theorem~\ref{aaR},
in that as $N$ goes to infinity the probability that a given embedding of
some $A$ does not extend to an embedding of $B$ drops to $0$.  However,
in this finite case we must work harder to account for potential new embeddings
of $A$.  We use a proof reminiscent of Fagin~\cite[Theorem~2]{Fag:PFM}.

Suppose $A\in\K$ has cardinality $n-1$ 
and $B\in\K$ is a one-point extension of $A$, and as 
in the proof of Theorem~\ref{RecCatQE},
let $\phi_B$ denote the corresponding extension axiom.
Let $\psi_B(x_1,\ldots,x_{n-1})$ denote
\begin{multline*}
\bigwedge_{m=1}^{n-1} \;\; \bigwedge_{k=1}^{k_m} \;\; \bigwedge_{1\leq i_1,\ldots,i_m<n}
\bar{R}^{B:i_1,\ldots,i_m}_{m,k}(x_{i_1},\ldots,x_{i_m})
\\
\land
\forall x_n\left(
\bigvee_{i=1}^{n-1}x_i= x_n\ \lor\ 
\bigvee_{m=1}^{n} \;\; \bigvee_{k=1}^{k_m} \;\; \bigvee_{1\leq i_1,\ldots,i_{m}\leq n}
\lnot\bar{R}^{B:i_1,\ldots,i_{m}}_{m,k}(x_{i_1},\ldots,x_{i_{m}})
\right),
\end{multline*}
that is, the negation of the $\phi_B$ without the quantification or
distinctness requirement on the variables $x_1,\ldots,x_{n-1}$.
Then for any $N$ we have
\begin{align*}
\mu_N&(\{S\in\K^N\st S\nsat\phi_B\})\\
&=
\mu_N\Big(\Big\{S\in\K^N\st S\sat
\exists x_1\cdots\exists x_{n-1}
\Big(
\bigwedge_{1\leq i\neq j\leq n-1}\!\!\!\!\!x_i\neq x_j\ \land\ 
\psi_B(x_1,\ldots,x_{n-1})
\Big)\Big\}\Big)\\
&\leq
\sum\Big\{\mu_N(\{S\in\K^N\st S\sat\psi_B(a_1,\ldots,a_{n-1})\})\st
0\leq a_1,\ldots,a_{n-1}\leq N-1 \\
&\phantom{=
\sum\Big\{\mu_N(\{S\in\K^N\st S\sat\psi_B(a_1,\ldots,a_{n-1})\})\st}
\ \land
a_i\neq a_j \text{ for }i\neq j\Big\}\\
&=
{N\choose n-1}\mu_N(\{S\in\K^N\st S\sat\psi_B(1,\ldots,n-1)\})
\qquad\text{ by symmetry}\\
&\leq N^{n-1}\mu_N(\{S\in\K^N\st S\sat\psi_B(1,\ldots,n-1)\})\\
&=N^{n-1}\mu_N\Big(\Big\{S\in\K^N\st
\forall y(\bigwedge_{i=1}^{n-1}y\neq i \implies\\
&\qquad\qquad\qquad\qquad
\bigvee_{m=1}^{n} \;\; \bigvee_{k=1}^{k_m} \;\; \bigvee_{1\leq i_1,\ldots,i_{m-1}\leq n}
\!\!\!\!
\lnot\bar{R}^{B:i_1,\ldots,i_{m-1},n}_{m,k}(x_{i_1},\ldots,x_{i_{m-1}},y))
\Big\}\Big)
\end{align*}
Let $p=\mu_N(B,\|B\|)/\mu_N(A,\|A\|)$.  Then applying 
Lemma~\ref{usefulindep}, the last line above becomes
$N^{n-1}\mu_N(O_N(A,\|A\|))(1-p)^{N-n+1}$.
Since $p$ is non-zero, this converges to $0$ as $N$ goes to infinity,
so $\lim_{N\to\infty}\mu_N(\{S\in\K^N\st S\nsat\phi_B\})=0$, as required.
\end{proof}
We shall follow the convention that 
``almost all simplicial complexes satisfy $\psi$'' is a shorthand for
``as $N$ goes to infinity, the measure of the set of simplicial complexes
satisfying $\psi$ goes to 1''.

Let us consider the particular case of
simplicial complexes.  
The way we have set up our formalism,
given an underlying set $V$, only a subset of $V$ will be vertices
(that is, singletons, or 0-dimensional faces) in our simplicial complex.
In some contexts this is appropriate, but to fit better with conventions
in the area, it is preferable to have all members of $V$ as vertices in 
any simplicial complex ``on $V$''.  This is actually easy to achieve:
we simply omit the relation $S_0$ from our signature, and otherwise proceed
unchanged.
Unless otherwise stated, this will be our approach henceforth.

It is obvious that our \zo differs from that of Blass and 
Harary; for example, for the the frame-wise uniform measure, almost all
simplicial complexes have an incomplete underlying graph.
Indeed,
a big advantage of the frame-wise uniform measure is that it respects
the probabilities of underlying graphs, with the result that we can
draw on the large body of knowledge regarding the random graph.  For
example, the result of Erd\H{o}s and Renyi~\cite{ErR:AsG} that almost
every graph is rigid lifts to give us the following.
\begin{prop}
Under the frame-wise uniform measure, 
almost all simplicial complexes have trivial automorphism group.
\end{prop}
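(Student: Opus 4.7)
The plan is to reduce to the classical result of Erdős and Rényi that almost every finite graph is rigid. First I would verify that, under the frame-wise uniform measure $\mu_N$, the marginal distribution of the 2-frame $\f{\Delta}{2}$ of a random simplicial complex $\Delta \in \K^N$ is precisely the uniform distribution on graphs on $N$ vertices, which coincides with the Erdős--Rényi random graph $G(N,1/2)$.

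To establish this, note that under the convention in force (with $S_0$ removed from $\Ssc$) there is a unique 1-frame on the vertex set $\{0,\ldots,N-1\}$, namely the empty structure. Every graph on $N$ vertices is a valid 2-frame of some finite simplicial complex (it is itself a 1-dimensional simplicial complex), so the number of 2-frames extending the trivial 1-frame equals $2^{\binom{N}{2}}$. By Definition~\ref{muBasis} each specific 2-frame then has $\mu_N$-measure $2^{-\binom{N}{2}}$, which is exactly the distribution of $G(N,1/2)$.

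Next, I would observe that any automorphism of $\Delta$ restricts to an automorphism of the underlying graph $\f{\Delta}{2}$, so
\[
\mu_N(\{\Delta \in \K^N : \Aut(\Delta) \neq 1\}) \leq \mu_N(\{\Delta \in \K^N : \Aut(\f{\Delta}{2}) \neq 1\}).
\]
By the classical theorem of Erdős and Rényi~\cite{ErR:AsG}, the probability that $G(N,1/2)$ admits a nontrivial automorphism tends to $0$ as $N \to \infty$. Combining these observations yields the claim.

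The argument is an essentially immediate reduction, so I do not foresee a serious obstacle. The only point requiring a little care is the identification of the marginal of $\mu_N$ on the 2-frame with the uniform measure on graphs; this is a direct unwinding of Definition~\ref{muBasis}, together with the (trivial) fact that every graph is the 1-skeleton of some simplicial complex.
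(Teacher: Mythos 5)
Your proof is correct and follows essentially the same route as the paper: both reduce to the Erd\H{o}s--R\'enyi rigidity theorem by noting that automorphisms of a complex restrict to automorphisms of its underlying graph, and that the frame-wise uniform measure induces exactly the uniform ($G(N,1/2)$) distribution on $2$-frames. The only difference is cosmetic --- you unwind Definition~\ref{muBasis} to justify the marginal identification, which the paper simply asserts ``by definition''.
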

\begin{proof}
Any automorphism of a simplicial complex is also an automorphism on the
underlying graph.  But now
\[
\mu_N(\{S\in\Ksc^N\st S^1\text{ has trivial automorphism group}\})
\]
equals (by definition)
the proportion of graphs on $N$ vertices with trivial automorphism group,
so 
\begin{multline*}
\mu_N(\{S\in\Ksc^N\st S\text{ has trivial automorphism group}\})\\
\geq
\mu_N(\{S\in\Ksc^N\st S^1\text{ has trivial automorphism group}\})
\end{multline*}
which converges to 1 as $N$ goes to infinity.
\end{proof}
We note that Bollob\'as and Palmer~\cite{BoP:AASCP} obtained the same result for
the usual uniform measure on $\Ksc^N$. 
Observe that this result also gives us a \zo for an unlabelled
form of the frame-wise uniform measure, 
with the infinite random simplicial complex $\RSC$ 
still acting as the 
oracle for truth.

\section{Algebraic properties}
\label{algebra}

As mentioned in the introduction, the automorphism groups of 
homogeneous structures is an important area of current research.
In this section we present some basic results about 
the automorphism groups of $\RSC$ and similar \Fraisse limits,
making use of locality.
For this purpose, the 
symmetry properties common to simplicial complexes and the other examples in
Section~\ref{Egs} are of course very relevant.  Thus, for this 
section, let $\K$ be a \genskelish class every member of which satisfies the
Symmetry schema of Definition~\ref{SCdefn}.  
Let $\AlgR$ denote the \Fraisse limit of $\K$,
and let $V=|\AlgR|$; in particular, we drop our assumption from the previous
section that countable structures have underlying set $\N$
(although it would do no harm).

We recall some basic definitions from the theory of group actions,
particularly profinite group actions --- see for example
\cite{Len:PrG} or \cite{Wil:PrG} for background.
Given an action of a group $G$ with identity element $e$ on a set $X$, 
for each $x\in X$ the \emph{stabilizer}
of $x$ is the subgroup of $g\in G$ such that $gx=x$.  We thus say that an
action has \emph{trivial stabilizers} on $Y\subset X$ if for all $y\in Y$, 
$gy=y\implies g=e$.
An action is \emph{faithful} if for all $g\neq h\in G$ there is an $x\in X$
such that $gx\neq hx$.

\begin{lemma} \label{este}
Let $H$ be a finite group, $S \subset V$ a finite subset, and $v \in V \setminus S$ an 
element.  Suppose that $\rho \colon H \times (\AlgR\restr S) \to \AlgR \restr S$ is a left action.  
Then there is a finite subset $S' \subset V$ such that $S \cup \{v\} \subset S'$ and the action 
$\rho $ extends to an action of $H$ on $\AlgR \restr S'$ with trivial stabilizers on 
$S' \setminus S$.
\end{lemma}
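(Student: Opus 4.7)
The plan is to construct abstractly a finite $B\in\K$ on which $H$ already acts in the required fashion, and then to realise $B$ inside $\AlgR$ via weak homogeneity so that $S$ and $v$ are fixed pointwise. Enumerate $H=\{h_1=e,h_2,\ldots,h_n\}$, introduce fresh symbols $v_2,\ldots,v_n$, and set $v_1:=v$; the idea is that $h_i$ should send $v_j$ to $v_k$ whenever $h_k=h_ih_j$. Write $C:=\AlgR\restr(S\cup\{v\})$, which lies in $\K$ by the Hereditary Property. Define the $\Sigma$-structure $B$ on $S\cup\{v_1,\ldots,v_n\}$ by: letting $B$ agree with $C$ on $S\cup\{v_1\}$; declaring $R^B(\mathbf{b})$ to hold for a tuple $\mathbf{b}$ containing exactly one new symbol $v_j$ ($j\geq 2$) exactly when $R^C(\mathbf{b}')$ holds, where $\mathbf{b}'$ is obtained from $\mathbf{b}$ by replacing $v_j$ with $v$ and every $s\in S$ occurring in $\mathbf{b}$ by $\rho(h_j^{-1},s)$; and declaring every relation false on any tuple meeting $\{v_1,\ldots,v_n\}$ in at least two distinct entries. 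Let $H$ act on $B$ by $h\cdot s=\rho(h,s)$ on $S$ and by $h\cdot v_i=v_k$ (with $h_k=hh_i$) on the new orbit; by construction this action is by isomorphisms of $B$.

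The main obstacle is checking that $B\in\K$. Take any local axiom $\phi\equiv\forall x_1\cdots\forall x_m(R(x_1,\ldots,x_m)\implies\psi)$ of $\Phi_\K$ and suppose $R(\mathbf{b})$ holds in $B$ for some tuple $\mathbf{b}$. By the defining cases of $B$, $\mathbf{b}$ lies either entirely inside $S$ or contains a unique new symbol $v_j$; applying $h_j^{-1}$ (or the identity if $\mathbf{b}\subset S$) produces a tuple $\mathbf{b}'\subset S\cup\{v\}$ on which $R$ holds in $C$. A case-by-case check of the atomic subformulas of $\psi$ shows that for each subtuple of $\mathbf{b}$ the corresponding atomic relation holds in $B$ iff it holds on the matching subtuple of $\mathbf{b}'$ in $C$: this uses that $\rho(h_j^{-1},-)$ is an automorphism of $\AlgR\restr S$ for subtuples contained in $S$, and the defining rule for $B$ for subtuples containing $v_j$. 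Since $C$ inherits $\phi$ from $\AlgR$, $\psi(\mathbf{b}')$ holds in $C$, hence $\psi(\mathbf{b})$ holds in $B$; therefore $B\in\K$.

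To conclude, weak homogeneity applied to the inclusion of substructures $C\hookrightarrow\AlgR$ yields an embedding $g\colon B\hookrightarrow\AlgR$ with $g(s)=s$ for $s\in S$ and $g(v_1)=v$. Putting $S':=g(|B|)$ gives a finite subset of $V$ containing $S\cup\{v\}$, and $g$ is an isomorphism $B\cong\AlgR\restr S'$. Transferring the $H$-action on $B$ through $g$ produces an action of $H$ on $\AlgR\restr S'$ extending $\rho$; since $S'\setminus S=\{v,g(v_2),\ldots,g(v_n)\}$ is a single free $H$-orbit, stabilizers on $S'\setminus S$ are trivial, as required.
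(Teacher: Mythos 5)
Your proof is correct, and it takes a genuinely different route from the paper's. The paper constructs $S'$ inside $\AlgR$ recursively, one group element at a time: at each stage it takes the induced substructure of $\AlgR$ on the points chosen so far plus an arbitrary new point $w$, repairs its relations so that the partial action is respected --- via a double induction (on the arity $m$, and within each arity over the $m$-element subsets containing $w$) powered by the Adoptive Property --- and only then invokes weak homogeneity to find the true vertex; weak homogeneity is thus applied $\|H\|$ times, and the new points keep the relations that $\AlgR$ gives them wherever possible. You instead build the entire structure $B$ abstractly in one shot, on $S$ together with a single regular $H$-orbit, defining relations by equivariant pullback into $C=\AlgR\restr(S\cup\{v\})$ on tuples meeting the orbit in exactly one point and killing all relations on tuples meeting it in two or more points; you then verify $B\in\K$ directly from the syntactic form of local sentences, and apply weak homogeneity exactly once. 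Your route buys simplicity and some extra generality: it bypasses the Adoptive Property entirely, and your tuple-wise pullback rule nowhere uses the Symmetry schema (the paper's set-wise assignment of relations on the sets $X_j$ implicitly relies on it), so your argument establishes the lemma for an arbitrary local class. What the paper's in-place repair buys is an extension that is ``denser'' --- it preserves the relations $\AlgR$ already has among the chosen points rather than forcing all mixed tuples to be relation-free --- and a stage-by-stage template that is reused nearly verbatim in the proof of Lemma~\ref{sotto}. Two points you leave implicit and should state: that $B$ embeds into $\AlgR$ at all (as required to apply Definition~\ref{wkhom}) holds because $B\in\K$ and $\K$ is the age of $\AlgR$; and $C\sat\Phi_\K$ because local sentences are universal and hence pass from $\AlgR$ to its finite induced substructures.
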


\begin{proof}
Enumerate the group $H$ as $h_0=e, h_1,\ldots,h_{n}$.
We choose elements $v_{h_i}\in V\smallsetminus S$ so that 
$\rho$ extends to an action $\bar\rho$ defined on 
$S'=S\cup\{v_h\st h\in H\}$ by 
$\bar\rho(h_i,v_{h_j})=v_{h_ih_j}$ and 
$\bar\rho(h,s)=\rho(h,s)$ for $s\in S$.
We do this recursively in $i\leq n$.
For $i=0$, we choose $v_e = v$. 
For $i>0$, 
let $H_i=\{h_0,\ldots,h_i\}$; we choose the vertex
$v_{h_i}$ such that the following properties hold.
\begin{enumerate}
\item For all $j<i$, $v_{h_i}$ is different from $v_{h_j}$.
\item \label{HrespR}\label{lastindassump}
Suppose that
$R$ is an $(m_1+m_2)$-ary relation in the signature $\Sigma$ of $\K$,
$\mathbf{s}$ is an $m_1$-tuple from $S$, and
$\mathbf{h}$ is an $m_2$-tuple from $H_i$. 
Let
$\mathbf{v}$ be the $m_2$-tuple $(v_g)_{g\in\mathbf{h}}$, and suppose
$h\in H$ is such that for every component $g$ of $\mathbf{h}$,
the product $hg$ is in $H_i$.
Then we require that
$R(\mathbf{s},\mathbf{v})$ holds in $\AlgR$ 
if and only if
$R((\rho(h,s))_{s\in\mathbf{s}},(v_{hg})_{g\in\mathbf{h}})$
does.
\end{enumerate}
Clearly this is exactly what is needed to prove the Lemma with
$S'=S\cup\{v_{h}\st h\in H\}$ and $\bar\rho$ the extension of the action; 
the difficultly is in seeing that achieving
(\ref{HrespR}) is possible.  We now elucidate this.

Suppose $v_{h_0},\ldots,v_{h_{i-1}}$ have been defined 
satisfying conditions (1) and (\ref{lastindassump});
we shall describe how to choose
an appropriate $v_{h_i}$.  To ground the argument, choose an arbitrary
$w\in V\smallsetminus (S\cup\{v_{h_0},\ldots,v_{h_{i-1}}\})$.
Consider the substructure $B$ of $\AlgR$ on 
$|B|=S\cup\{v_{h_0},\ldots,v_{h_{i-1}}\}\cup\{w\}$. 
It lies in $\K$, and $\bar\rho$ (with $w$ as $v_{h_i}$) acts as a partial
action on its underlying set, but probably does not respect its relations.
If we can demonstrate how to change $B$ to another structure $\bar B$ in $\K$
for which $\bar\rho$ respects the relations, 
without changing the substructure on
$S\cup\{v_{h_0},\ldots,v_{h_{i-1}}\}$, 
then weak homogeneity will allow us to choose an appropriate 
$v_{h_i}\in V\smallsetminus(S\cup\{v_{h_0},\ldots,v_{h_{i-1}}\})$
so that $\AlgR\restr S\cup\{v_{h_0},\ldots,v_{h_{i}}\}$ is isomorphic to 
$\bar B$,
whence (\ref{HrespR}) will be satisfied.
With this in mind we temporarily shift our perspective, thinking of
our underlying set $|B|=S\cup\{v_{h_0},\ldots,v_{h_{i-1}},w=v_{h_i}\}$ 
as fixed but the relations satisfied by $B$ as being
mutable, so long as we remain in $\K$ and do not change the induced 
substructure on $S\cup\{v_{h_0},\ldots,v_{h_{i-1}}\}$.

We change our relations $R$ by induction on the arity of $R$; thanks to
General Irreflexivity, 
this process will terminate after $\|B\|$ stages, giving a $\bar B$
for which $\bar\rho$ respects the relations.  So suppose we have a 
$B_m\in\K$ such that for all 
$l<m$ and $R\in L$ of arity $l$, $\bar\rho$ respects $R$, that is,
for all $h\in H$,
$R(t_1,\ldots,t_l)$ if and only if $R(\bar\rho(h,t_1),\ldots\bar\rho(h,t_l))$
whenever all of the $v_g$ terms involved lie in 
$\{v_{h_0},\ldots,v_{h_{i}}\}$.

Enumerate the $m$-element subsets of $|B|$ containing $v_{h_i}$ as
$X_1,\ldots,X_{{\|B\|-1}\choose{m-1}}$.
Similarly to the proof of Lemma~\ref{strAdP},
we proceed to choose the $m$-ary relations that hold for $X_j$ by induction on
$j$, building structures $B_m^j$ with the same $(m-1)$-frame as $B_{m-1}$,
and the relations on $X_k$ for $1\leq k\leq j$ which we shall specify in such
a way that $\bar\rho$ respects $m$-ary relations on these $X_k$ and other
$m$-element subsets of $|B|\smallsetminus\{v_{h_i}\}$ .

For the base case take $B_m^0=B_m$.
Now suppose we have constructed $B_m^{j-1}$.
For each $h\in H$, consider $hX_j=\{\bar\rho(h,x)\st x\in X_j\}$.
If there is an $h$ such that $hX_j\subset(|B|\smallsetminus\{v_{h_i}\}$ or
$hX_j=X_{j'}$ for some $j'<j$, we take $B^j_m$ to be a structure $B'$ as given
by the Adoptive Property for $B=B^{j-1}_m$, $A=B^{j-1}_m\restr X_j$ and 
$A'=B^{j-1}_m\restr hX_j$.  That is, the $m$-ary relations on $X_j$ 
in $B^j_m$ are given by
\[
R(x_1,\ldots,x_m)\iff R(\bar\rho(h,x_1),\ldots,\bar\rho(h,x_m))
\]
whenever $\{x_1,\ldots,x_m\}=X_j$: this is precisely what is
required to make $\bar\rho(h,\cdot)$ respect $R$.  
The requirement on frames for the
Adoptive Property holds by the inductive assumption on $m$ and the fact that
$\fr{m-1}{(B^{j-1}_m)}=\fr{m-1}{B_{m-1}}$.
We also have that $B^j_m$ is well-defined, because if both $hX_j$ and $h'X_j$
are in $[|B|\smallsetminus\{v_{h_i}\}]^m\cup\{X_1,\ldots,X_{j-1}\}$,
then $(h'h^{-1})hX_j=h'X_j$, and so by the inductive assumption on $j$,
the $m$-ary relations on $hX_j$ and $h'X_j$ agree.

Finally, if there is no such $h$, our choice of the $m$-ary relations on
$X_j$ does not matter, and we may take $B^j_{m}=B^{j-1}_{m}$.
This completes the recursive construction of $B^j_m$ over $j$, and hence of
$B_m$ over $m$, and hence of $\bar B\in\K$ with $|\bar B|=|B|$ and
$\bar B\restr(|B|\smallsetminus\{v_{h_i}\})=
B\restr(|B|\smallsetminus\{v_{h_i}\})$
such that $\bar\rho$ respects the relations in $\bar B$.
As described above, the weak homogeneity of $\AlgR$ now lets us choose a
$v_{h_i}\in V$ such that 
$\AlgR\restr S\cup\{v_{h_0},\ldots,v_{h_i}\}$ is isomorphic to $\bar B$,
and we are done.
\end{proof}

\begin{coroll}
Let $G$ be a metrizable profinite group; then $G$ has a continuous faithful 
action on $\AlgR $.
\end{coroll}

\begin{proof}
Recall that a group is a metrizable profinite group if and only if there is a sequence $(H_n)_{n \geq 1}$ of normal open subgroups of $G$ of finite index such that $H_1\supset H_2 \supset \cdots \supset H_n \supset\cdots$ and $G = \mathop{\lim } \limits_{\substack{\longleftarrow \\n}} G/H_n$.
By induction on $n$ we construct a sequence of sets $S_n \subset V$ of vertices of $\AlgR $ such that 
\begin{itemize}
\item $S_n \subset S_{n+1}$;
\item there is a faithful action $\rho _n$ of $G/H_n$ on $\AlgR _{S_n}$;
\item  $\cup_n S_n = V$.
\end{itemize}
Let $V = \{v_i \mid i \in \N\}$ be an enumeration of $V$.  For $n=0$ let $H_0=G$ and $S_0=\{v_0\}$ with the trivial action of the trivial group $G/H_0$.  Suppose that $n \geq 1$ and that we already defined $S_{n-1}$ and a faithful action $\rho _{n-1}$ of $G/H_{n-1}$ on $S_{n-1}$.  Apply Lemma~\ref{este} with $H=G/H_n$ and $S=S_{n-1}$, the action $\rho$ factoring through $\rho _{n-1}$ and $v$ being the vertex of $V \setminus S_{n-1}$ with least index.  We thus obtain a faithful action $\rho _n$ of $G/H_n$ on $S' =: S_n$.

Define an action $\bar \rho $ of $G$ on $\AlgR $ by $\bar \rho (\bar g,v_n) = \rho _n (g,v_n)$, where $g$ is the image of $\bar g$ in $G/H_n$.  The action $\bar \rho$ is faithful, since an element acting trivially on every $v \in V$ is contained in $H_n$ for every $n \geq 1$.  
Because $G=\lim_{\leftarrow}(G/H_n)$, we have $\bigcap H_n = (e)$, 
and are done.
\end{proof}


\begin{lemma} \label{sotto}
Let $G$ be a finite group, $H \subset G$ a subgroup and $S \subset V$ a finite subset.  Suppose that $\rho \colon H \times (\AlgR \restr S) \to \AlgR \restr S$ is a left action with trivial stabilizers.  
Then there is a finite subset $S' \subset V$ with $S' \supset S$ 
and an action $\rho ' \colon G \times \AlgR \restr {S'} \to \AlgR \restr {S'}$ with trivial stabilizers such that $\rho ' |_{H \times \AlgR\,\restr\,S} = \rho$.
\end{lemma}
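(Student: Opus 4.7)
The plan is to realize, inside $\AlgR$, the standard construction of the induced $G$-action from an $H$-action. Fix left coset representatives $g_1=e,g_2,\ldots,g_n$ of $H$ in $G$, so $G=\bigsqcup_{i=1}^n g_i H$, and enumerate $S=\{s_1,\ldots,s_m\}$. I aim to select pairwise distinct vertices $v_{i,k}\in V$ for $1\leq i\leq n$ and $1\leq k\leq m$, with $v_{1,k}=s_k$, and set $S'=\{v_{i,k}\}$. The extended action is defined by $\rho'(g,v_{i,k})=v_{j,l}$ whenever $gg_i=g_jh$ with $h\in H$ and $\rho(h,s_k)=s_l$. Since $G$ permutes the cosets $g_iH$ freely, any $g$ fixing some $v_{i,k}$ must lie in $g_iHg_i^{-1}$; writing $g=g_ihg_i^{-1}$, we further need $\rho(h,s_k)=s_k$, so $h=e$ by hypothesis and hence $g=e$. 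Thus trivial stabilizers on all of $S'$ are automatic, provided the $v_{i,k}$ can be chosen so that $\rho'$ preserves the induced $\Sigma$-structure.

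Following the template of Lemma~\ref{este}, I construct the $v_{i,k}$ by induction on $i$. Suppose $v_{j,k}$ have been chosen for all $j<i$ so that $\rho'$, restricted to its partial domain, respects the structure induced on the already-constructed vertices. To choose the next batch $v_{i,1},\ldots,v_{i,m}$, it suffices by weak homogeneity of $\AlgR$ to exhibit a single structure $\bar B\in\K$ with underlying set $\{v_{j,k}:j\leq i,\,1\leq k\leq m\}$ that extends the substructure already fixed on the first $i-1$ copies, and on which $\rho'$ preserves every relation in $\Sigma$. Weak homogeneity then embeds $\bar B$ into $\AlgR$ fixing the earlier vertices pointwise, and the new $v_{i,k}$ are read off from the embedding.

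To build $\bar B$, I propagate the relations arity by arity, just as in Lemma~\ref{este}. For each arity $p$ and each $p$-element subset $X$ of the underlying set that meets the new copy $\{v_{i,1},\ldots,v_{i,m}\}$, consider the $G$-orbit of $X$ under the formal action $\rho'$: if some $\rho'(g,\cdot)$-image of $X$ lies entirely in the previously constructed vertices, the $p$-ary relations on $X$ are forced to match those on the image; otherwise they may be chosen arbitrarily, say trivially. The Adoptive Property lets us install these $p$-ary relations without disturbing the $(p-1)$-frame, keeping the result inside $\K$. The main obstacle, as in Lemma~\ref{este}, is the consistency of the forced assignments: if two distinct elements $g,g'\in G$ both translate $X$ into the older part, the two resulting forced patterns of relations must agree. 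After composing with $g^{-1}$ this reduces to the statement that $\rho$ preserves the relations on tuples from $S$, which holds precisely because $\rho$ is an action on the structure $\AlgR\restr S$; the inductive hypothesis then propagates the consistency across the successive cosets. Hence $\bar B\in\K$ is well-defined, weak homogeneity closes the induction, and the resulting $S'$ and $\rho'$ satisfy the conclusion.
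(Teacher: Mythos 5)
Your proof is correct and takes essentially the same route as the paper: both realize the induced $G$-set $G\times_H S$ (which the paper writes as $G\times\{1,\ldots,n\}$, with $n$ the number of free $H$-orbits on $S$) inside $\AlgR$ by building, via the arity-by-arity Adoptive Property argument of Lemma~\ref{este}, a structure in $\K$ whose relations are invariant under the formal $G$-action, and then embedding it over $S$ by weak homogeneity. The only difference is bookkeeping: the paper constructs the whole invariant structure at once and invokes weak homogeneity a single time, whereas you proceed coset by coset with one application of weak homogeneity per batch.
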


\begin{proof}
Let $n$ be the number of orbits of $H$ on $S$.  Identify $S$ with its action of $H$ with the subset $H \times \{1,\ldots , n\} \subset G \times \{1,\ldots , n\}$ with the trivial action of $H$ on the second component.  
Exactly as in the proof of Lemma~\ref{este}, we may construct a structure 
$\bar B$ in
$\K$ on underlying set $G\times\{1,\ldots,n\}$ such that the natural 
$G$-action (which extends the $H$-action on $H\times\{1,\ldots,n\}$) respects
the relations of $\bar B$.
Indeed the argument proceeds by induction over $m$ on the $m$-frame, 
and for each $m$ through an induction over $j$ on an enumeration $(X_j)$ of
$[G\times\{1,\ldots,n\}]^m\smallsetminus[H\times\{1,\ldots,n\}]^m$.
By weak homogeneity, the embedding of $H\times\{1,\ldots,n\}\cong S$ into
$\AlgR$ extends to an embedding of $\bar B$ into $\AlgR$, and we may take
$S'$ to be the range of this embedding and $\rho'$ to be the induced 
$G$-action.
\end{proof}

\begin{coroll}\label{injlims}
Let $G$ be the direct limit of a sequence of injections of finite groups $G_n \into G_{n+1}$ for $n \geq 1$.  Then $G$ has an action on $\AlgR $ with trivial stabilizers.
\end{coroll}

\begin{proof}
Let $G_0$ be the trivial group.  We prove by induction that for all $n \geq 0$ there is a subset $S_n \subset V$ with a left action $\rho _n \colon G_n \times \AlgR \restr{S_n} \to \AlgR \restr{S_n}$ with trivial stabilizers such that for all $n \geq 1$ we have 
\begin{itemize}
\item $S_{n-1} \subset S_n$, 
\item $\rho _n |_{G_{n-1} \times \AlgR\,\restr\,{S_{n-1}}} = \rho _{n-1}$ and 
\item $\bigcup S_n = V$.
\end{itemize}
Let $V = \{v_i \mid i \in \N\}$ be an enumeration of $V$.  For $n=0$ let $S_0=\{v_0\}$ and $\rho _0 \colon G_0 \times \AlgR _{S_0} \to \AlgR _{S_0}$ be the unique action.  Suppose that $n \geq 1$ and that $S_{n-1}$ and $\rho _{n-1}$ have been defined; let $v \in V \setminus S_{n-1}$ be the element with least index.  Apply Lemma~\ref{este} to obtain a finite $S' \supset S \cup \{v\}$ and an action $\rho '$ of $G_{n-1}$ on $\AlgR _{S'}$ with trivial stabilizers.  Apply Lemma~\ref{sotto} to $\rho '$ to obtain a subset $S_n \subset V$ and an action $\rho _n \colon G_n \times \AlgR _{S_n} \to \AlgR _{S_n}$ with the required properties.
\end{proof}

This means that for $n>0$, 
subgroups of $GL_n$ over the algebraic closure of a finite field
are subgroups of $\Aut(\AlgR)$.
So too are $S_\infty$ and $\Q/\Z$.
Indeed, in each of these cases, it is clear that every finite subset is 
contained in a finite subgroup.

Corollary~\ref{injlims} allows us to embed many divisible groups into $\Aut(\AlgR)$. 
Note however that $\Aut(\AlgR)$ itself
is not divisible.  To see this, consider the following construction of an element of $G$ of order 4 not admitting
a square root.  Take distinct vertices $v$ and $w$ in $V$
and let a generator $u\in\Z/4\Z$ act on $\{v,w\}$ non-trivially.  Use Lemma~\ref{este} inductively to extend this to a $\Z/4\Z$ action
on $\AlgR$ with trivial stabilizers on $V\setminus\{v,w\}$.  Suppose for contradiction that $g\in\Aut(\AlgR)$
is such that $g^2=u\in\Aut(\AlgR)$. Then $u(g(v))=g(u(v))=g(w)$ and $u(g(w))=g(v)$, and therefore 
$g(v)$ is an element of $V$ with non-trivial stabilizer in $\Z/4\Z$, and thus $g$ permutes $\{v,w\}$.  Hence 
$g^2(v)=v$, contradicting $g^2(v)=u(v)=w$.

\section{Topology}\label{topology}

In this section we establish the homeomorphism type of the random simplicial complex: despite its random construction, the geometric realisation 
of the random simplicial complex is topologically quite simple.

In this section we shall always give a name for the underlying set of a 
simplicial complex, freeing up the notation $|\cdot|$ to represent the
\emph{geometric realisation}, defined as follows.
Let $\SC$ be a simplicial complex on a set $V$.  
Let $e_v$ for $v \in V$ be the standard basis of $\mathbb{R}^V$.  The geometric realisation $|\SC|$ of $\SC$ is the union over faces $F \in \SC$ of the convex hull of the set $\{e_v \st v \in F \}$.  A subset $A \subset |\SC|$ is taken to be open whenever $A \cap \mathbb{R}^W$ is open for all finite $W \subset V$.  
We sometimes identify a vertex $v$ of $\SC $ and the corresponding element $e_v$ in the realisation of $\SC $.

\begin{defn}
Let $n$ be a natural number.  
The \emph{$n$-simplex} $\simplex{n}$ on $\{0,1,\ldots,n\}$ is the simplicial complex consisting of all subsets of $\{0,1,\ldots,n\}$; the 
\emph{standard $n$-simplex} is the geometric realisation of $\simplex{n}$.  
The \emph{infinite simplex} $\simplex{\infty}$ on $\N$ is the simplicial complex consisting of all finite subsets of $\N$, that is $\simplex{\infty}:=\fin{\N}$.
\end{defn}

Let $n$ be an element of $\mathbb{N} \cup \{\infty\}$.  A \emph{piecewise linear map} of a simplicial complex $\SC $ to $\mathbb{R}^n$ is a function from the geometric realisation of $\SC $ to $\mathbb{R}^n$ that is linear on every face of $\SC $.  Thus a piecewise linear function is uniquely determined by its values on the vertices of the geometric realisation of $\SC $.

A simplicial complex $\SC$ is a \emph{cone with vertex $a$} if whenever $F$ is a face of $\SC$ also $F \cup \{a\}$ is a face of $\SC$.

\begin{lemma}\label{subofcone}
Let $n$ be a natural number, 
let $V$ be a set and let $v\in V$.  
Suppose that $\SC $ is a simplicial complex on $V\smallsetminus\{v\}$ 
and that $\SC'$ is a simplicial complex on $V$ 
containing $\SC $ as an induced subcomplex.  
If $\varphi \colon |\SC| \to \mathbb{R}^n$ is a piecewise linear map to $\mathbb{R}^n$, 
then there is a unique piecewise linear map $\varphi' \colon |\SC '| \to \mathbb{R}^{n+1}$ such that $\varphi'(e_v) = (0,\ldots,0,1) \in \mathbb{R}^{n+1}$ and for all $y \in |\SC|$ we have $\varphi' (y) = (\varphi(y),0) \in \mathbb{R}^{n+1}$.  If the map $\varphi$ is a homeomorphism onto its image, then the same is true for the extension $\varphi'$.
\end{lemma}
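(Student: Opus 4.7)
The plan divides into existence/uniqueness and the topological statement. For the first, recall that a piecewise linear map is uniquely determined by its values on vertices, so I would define $\varphi'(e_v) := (0,\ldots,0,1)$ and $\varphi'(e_w) := (\varphi(e_w),0)$ for each vertex $w$ of $\SC'$ with $w \neq v$, and extend linearly to each face of $\SC'$. For this prescription to be well defined and to agree with $(\varphi,0)$ on $|\SC|$, I use the two hypotheses: a face of $\SC'$ not containing $v$ is a face of $\SC$ (by the induced-subcomplex assumption), and a face of $\SC'$ containing $v$ has the form $F_0\cup\{v\}$ with $F_0\in\SC$ (by heredity of simplicial complexes, together with the induced-subcomplex assumption applied to $F_0\subset V\setminus\{v\}$). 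On a face of the first kind $\varphi'$ coincides with $(\varphi,0)$, and on a face of the second kind it is the affine cone from $(0,\ldots,0,1)$ over $(\varphi,0)$ applied to $F_0$; these extensions agree on overlapping faces, so $\varphi'$ is well defined. Continuity in the weak topology on $|\SC'|$ is automatic since $\varphi'$ is continuous on every face.

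Now suppose $\varphi$ is a homeomorphism onto its image. The critical observation is that the $(n+1)$-st coordinate of $\varphi'(x)$ equals $\pi(x)$, the coefficient of $e_v$ in the barycentric expression of $x$ as a point in $\mathbb{R}^V$. For injectivity: if $\varphi'(x) = \varphi'(x')$ then $\pi(x) = \pi(x') =: t$; when $t = 1$ both points equal $e_v$, and when $t < 1$ one recovers $y := (x - t e_v)/(1-t)\in|\SC|$ and similarly $y'$, so the first $n$ coordinates give $\varphi(y)=\varphi(y')$, and injectivity of $\varphi$ forces $y=y'$, whence $x=x'$.

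For continuity of $(\varphi')^{-1}$ on the image, the formula
\[
(\varphi')^{-1}(z) = (1-z_{n+1})\,\varphi^{-1}\!\bigl((z_1,\ldots,z_n)/(1-z_{n+1})\bigr) + z_{n+1}\,e_v
\]
for $z_{n+1}<1$, together with $(\varphi')^{-1}(0,\ldots,0,1) = e_v$, is manifestly continuous on the open set $\{z_{n+1}<1\}$ by continuity of $\varphi^{-1}$. The subtlest step is continuity at $e_v$, since in the weak topology $e_v$ need not have a countable neighborhood base when $\SC'$ is not locally finite. Here I would use the defining characterization of the weak topology---a subset of $|\SC'|$ is closed iff its intersection with each face is closed---combined with the fact that $\varphi'$ restricted to any single face of $\SC'$ is an affine embedding of a Euclidean simplex into $\mathbb{R}^{n+1}$, hence already a homeomorphism onto its image on each face. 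This face-by-face reduction shows that $(\varphi')^{-1}$ carries closed sets to closed sets, completing the argument.
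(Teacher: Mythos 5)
Your construction of $\varphi'$ (extend linearly from the vertex values), the well-definedness and uniqueness arguments, the injectivity argument via the last barycentric coordinate, and the continuity of $\varphi'$ itself are all correct, and they coincide with the paper's much terser proof, which writes each $x\in|\SC'|$ as $te_v+(1-t)y$ with $y\in|\SC|$ and sets $\varphi'(x)=((1-t)\varphi(y),t)$ --- the same map. You were also right to single out continuity of $(\varphi')^{-1}$ at the apex as the delicate point; the problem is that your proposed resolution of that point does not work.

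The face-by-face characterization of the weak topology runs in the wrong direction for what you need: knowing that $C\cap|F|$ is closed for every face $F$ certifies that $C$ is closed \emph{in the domain} $|\SC'|$, but to get continuity of $(\varphi')^{-1}$ you must show that $\varphi'(C)=\bigcup_F\varphi'(C\cap|F|)$ is closed \emph{in the image}, and this is a union of compact sets indexed by the faces of $\SC'$. When $\SC'$ has infinitely many faces such a union need not be closed, and no argument can repair this, because the homeomorphism claim is actually false for infinite $V$ as the lemma is stated. Take $V=\{v\}\cup\{w_i\mid i\geq1\}$, let $\SC$ have the singletons $\{w_i\}$ as its only nonempty faces, let $\SC'=\SC\cup\bigl\{\{v\}\bigr\}\cup\bigl\{\{v,w_i\}\mid i\geq1\bigr\}$, and define $\varphi\colon|\SC|\to\mathbb{R}^{1}$ by $\varphi(e_{w_i})=1/i$; this $\varphi$ is a homeomorphism onto its image, since both $|\SC|$ and $\{1/i\mid i\geq1\}$ are discrete. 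The points $x_i=\bigl(1-\tfrac1i\bigr)e_v+\tfrac1i e_{w_i}$ form a closed subset of $|\SC'|$ (each face contains at most one of them) which avoids $e_v$, yet $\varphi'(x_i)=(1/i^{2},\,1-1/i)\to(0,1)=\varphi'(e_v)$; so the image of this closed set is not closed in $\varphi'(|\SC'|)$ --- exactly an infinite union of closed singletons failing to be closed --- and $\varphi'$ is not a homeomorphism onto its image. The gap (and the lemma) is repaired by assuming $V$ finite, as Lemma~\ref{estecono} explicitly does and as is the case in the only application, Theorem~\ref{toptriv}: then your union is finite, or, more cheaply, $|\SC'|$ is compact and $\mathbb{R}^{n+1}$ is Hausdorff, so the continuous injection $\varphi'$ is automatically a homeomorphism onto its image, with no need for the inverse formula or the analysis at the apex at all.
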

\begin{proof}
Every point $x$ in $|\SC'|$ is in the convex hull of $\{e_w\st w\in F\}$
for some face $F$ of $\SC'$, and hence may be written as 
$te_v+(1-t)y$ for some $t\in[0,1]$ and $y\in|\SC|$.
It is then straightforward to check that defining 
$\varphi'(x)=((1-t)\varphi(y),t)$ yields a well-defined, piecewise linear
map $\varphi'\colon|\SC'|\to\R^{n+1}$, 
that is a homeomorphism to its image if
$\varphi$ is.
\end{proof}

\begin{lemma} \label{estecono}
Let $V$ be a finite set, let $\SC$ be a simplicial complex on $V$ and let $v$ be a vertex of $\SC$.  
Suppose that $n$ is a natural number and $\varphi \colon |\SC | \to \mathbb{R}^{n+1}$ is a piecewise linear map such that $\varphi (e_v) = (0,\ldots , 0,1) \in \mathbb{R}^{n+1}$ and \(\varphi\restr_{|\SC_{V \setminus \{v\}}|}\) is a homeomorphism to 
the standard $(n-1)$-simplex.  
There exists a finite set $V'$ containing $V$, a simplicial complex $\SC'$ on $V'$ and a piecewise linear map $\varphi' \colon |\SC'| \to \mathbb{R}^{n+1}$ such that 
\begin{itemize}
\item the complex $\SC $ is the induced subcomplex of $\SC'$ on $V$, 
\item the piecewise linear map $\varphi'$ is a homeomorphism to the standard $n$-simplex in $\mathbb{R}^{n+1}$, and 
\item the restriction of $\varphi'$ to $|\SC|$ coincides with $\varphi$.
\end{itemize}
\end{lemma}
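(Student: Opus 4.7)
The plan is to triangulate the standard $n$-simplex $P\subset\mathbb{R}^{n+1}$ by filling the ``missing cones'' from $\varphi(e_v)$ with fresh interior vertices, one per missing cone. Write $T := \SC_{V\setminus\{v\}}$ and $T_v := \{G\in T \st \{v\}\cup G\in\SC\}$; since $v$ is a vertex of $\SC$, $\emptyset\in T_v$, and $T_v$ is easily seen to be a subcomplex of $T$. By hypothesis $\varphi\restr_{|T|}$ is a homeomorphism onto the standard $(n-1)$-simplex $F_0\subset\mathbb{R}^n\times\{0\}\subset\mathbb{R}^{n+1}$, and $P$ is the cone from $\varphi(e_v)=(0,\ldots,0,1)$ over $F_0$. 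For $G\in T$ set $\sigma_G := \mathrm{conv}(\varphi(e_v),\varphi(G))\subset P$; as $G$ ranges over the facets of $T$, the $\sigma_G$ tile $P$, and $\varphi(|\SC|)=F_0\cup\bigcup_{G\in T_v}\sigma_G$, so the portion of $P$ that remains to be filled is $\bigcup_{G\in T\setminus T_v}\sigma_G$.

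I would construct, by induction on the cardinality of $G\in T$, a triangulation $\mathcal{T}_G$ of $\sigma_G$. If $G\in T_v$, let $\mathcal{T}_G$ be the collection of faces of $\{v\}\cup G$, all of which already lie in $\SC$. If $G\in T\setminus T_v$, introduce a fresh vertex $w_G$, send it to the barycentre of $\sigma_G$, and let $\mathcal{T}_G$ be the cone from $w_G$ over the boundary triangulation of $\partial\sigma_G$, assembled from the faces of $G$ (triangulating the base $\mathrm{conv}(\varphi(G))$) together with the inductively constructed $\mathcal{T}_{G\setminus\{u\}}$ for $u\in G$ (triangulating the sides $\sigma_{G\setminus\{u\}}$). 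Let $V'$ be the finite set $V\cup\{w_G \st G\in T\setminus T_v\}$, let $\SC'$ be the union of the $\mathcal{T}_G$, and let $\varphi'$ extend $\varphi$ by sending each $e_{w_G}$ to the barycentre of $\sigma_G$ and requiring piecewise linearity on every face of $\SC'$.

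The key compatibility one must verify is that $\mathcal{T}_G$ and $\mathcal{T}_H$ agree on $\sigma_G\cap\sigma_H=\sigma_{G\cap H}$, which is forced because each is built over the same, inductively defined $\mathcal{T}_{G\cap H}$. Granted this, the three required properties follow: (i) $\SC$ is the induced subcomplex of $\SC'$ on $V$, since any face of $\SC'$ with all vertices in $V$ uses no $w_{G'}$, and is therefore either a face of $T\subset\SC$ or of the form $\{v\}\cup H$ with $H\subset G$ for some $G\in T_v$, hence in $\SC$; (ii) $\varphi'$ is a homeomorphism onto $P$, since the $\sigma_G$ for $G$ a facet of $T$ tile $P$, $\varphi'$ restricts to a piecewise linear homeomorphism on each $\mathcal{T}_G$ by construction, and $|\SC'|$ is compact while $P$ is Hausdorff; and (iii) $\varphi'\restr_{|\SC|}=\varphi$ by construction. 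The main obstacle is the combinatorial bookkeeping underpinning the inductive description of $\mathcal{T}_G$ and the compatibility check, but both are routine once organised by induction on $\card{G}$.
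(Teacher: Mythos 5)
Your proof is correct and takes essentially the same approach as the paper: your faces $G \in T\setminus T_v$ are exactly the paper's list $F_1,\ldots,F_r$ of faces $F$ with $F\cup\{v\}$ missing from $\SC$, and both arguments fill each missing cone $\sigma_G$ by adding one fresh vertex at its barycentre and coning over the already-triangulated boundary, proceeding by increasing dimension. The only difference is organisational: the paper runs a sequential induction along an enumeration ordered by dimension, whereas you run a structural induction on $\card{G}$ with the compatibility $\sigma_G\cap\sigma_H=\sigma_{G\cap H}$ stated explicitly.
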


\begin{proof}
Let $(F_1 , F_2 , \ldots , F_r)$
be a list of those faces $F$ of $\SC $ such that 
$F \cup \{v\}$ is not a face of $\SC$, ordered by increasing dimension.  
Note in 
particular that for every proper subset $X \subsetneq F_1$ we have that $X \cup \{v\}$ is a face of $\SC $.  
Let $\SC_1$ be the simplicial 
complex obtained by adding to $\SC $ a new vertex $v_1$ 
and having $\{v_1\} \cup G$ as a face if and only if 
$G \subsetneq F_1 \cup \{v\}$.  
(Thus $\SC_1$ is obtained by adding to $\SC$ the cone with apex $v_1$ over the boundary of the simplex $F_1 \cup \{v\}$, and the simplex $F_1 \cup \{v\}$ is not a face of $\SC$.)
We extend $\varphi $ to a piecewise linear map $\varphi _1$ of $|\SC _1|$ into $|\simplex{n}|$ by mapping $v_1$ to the barycentre $b_1$ of 
$\varphi(|F_1 \cup \{v\}|)$.  
Observe that the image of the extension $\varphi_1$ contains the convex hull of $\varphi (F_1 \cup \{v\})$.  

Now suppose that $i \geq 2$ 
and that $\SC_{i-1}$ and $\varphi_{i-1}$ are already defined.  
Let $v_{i_1} , \ldots , v_{i_s}$ be the vertices of 
$\SC_{i-1} \setminus \SC$ that lie on the cone in $\mathbb{R}^{n+1}$ with 
vertex $\varphi(e_v)$ and base the image under $\varphi$ of
the boundary of $|F_i|$; denote 
by $b_{i_1} , \ldots , b_{i_s}$ the images under $\varphi_{i-1}$ of $v_{i_1}, \ldots , v_{i_s}$ respectively.  Let $\SC_i$ be the simplicial 
complex obtained by adding a new vertex $v_i$ to $\SC _{i-1}$ and adding to $\SC _{i-1}$ the cone with vertex $v_i$ on the simplicial 
complex induced by $\SC_{i-1}$ on $F_i \cup \{v\} \cup \{v_{i_1} , \ldots , v_{i_s}\}$.  We extend $\varphi _{i-1}$ to a piecewise linear map 
$\varphi _i \colon |\SC _i| \to|\simplex{n}|$ by letting $\varphi _i (v_i)$ be the 
barycentre $b_i$ of the set consisting of $v$ and the vertices of 
$\varphi_{i-1}(|F_i|)$. 
Observe that the image of the extension $\varphi _i$ contains the convex hulls for $j \leq i$ of 
$\varphi (F_j \cup \{v\})$, 
and that $\varphi _i$ is a homeomorphism onto its image.  
Continuing in this manner,
we conclude that the simplicial complex $\SC':= \SC_r$ and the piecewise linear map $\varphi' := \varphi_r$ satisfy the requirements 
and the proof is complete.
\end{proof}

\begin{thm}\label{toptriv}
The geometric realisation of the random simplicial complex $\RSC$ is 
homeomorphic to the realisation of the infinite simplex $\simplex{\infty}$.
\end{thm}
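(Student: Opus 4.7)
The plan is to build the desired homeomorphism as a colimit of piecewise linear homeomorphisms from finite induced subcomplexes $\RSC\restr V_n$ onto standard $n$-simplices, constructed inductively via Lemmas~\ref{subofcone} and~\ref{estecono}; weak homogeneity of the \Fraisse limit supplies the essential extension step at each stage.

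Enumerate the vertices of $\RSC$ as $v_0, v_1, \ldots$, and recursively produce finite sets $V_0\subset V_1\subset\cdots\subset|\RSC|$ with $v_n\in V_n$, together with piecewise linear homeomorphisms $\varphi_n\colon|\RSC\restr V_n|\to\R^{n+1}$ onto the standard $n$-simplex, such that $\varphi_n$ agrees with $\varphi_{n-1}$ on $|\RSC\restr V_{n-1}|$ after the inclusion $\R^n\hookrightarrow\R^{n+1}$ in the first $n$ coordinates. The base $V_0=\{v_0\}$, $\varphi_0(e_{v_0})=1\in\R$ is immediate. For the inductive step, given $V_{n-1}$ and $\varphi_{n-1}$, pick a new vertex $v$ (taking $v=v_n$ if $v_n\notin V_{n-1}$) and put $W=V_{n-1}\cup\{v\}$. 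Lemma~\ref{subofcone} extends $\varphi_{n-1}$ uniquely to a piecewise linear map $\tilde\varphi\colon|\RSC\restr W|\to\R^{n+1}$ with $\tilde\varphi(e_v)=(0,\ldots,0,1)$; the restriction of $\tilde\varphi$ to $|\RSC\restr V_{n-1}|$ remains a homeomorphism onto the standard $(n-1)$-simplex in $\R^{n+1}$, which is exactly the hypothesis of Lemma~\ref{estecono}. That lemma delivers a finite simplicial complex $\SC'$ on some vertex set $V'\supset W$ containing $\RSC\restr W$ as an induced subcomplex, and a piecewise linear homeomorphism $\varphi'\colon|\SC'|\to\R^{n+1}$ onto the standard $n$-simplex extending $\tilde\varphi$. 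Since $\RSC\restr W$ is a common finite induced substructure of both $\SC'$ and $\RSC$, the weak homogeneity of $\RSC$ furnishes an embedding $\SC'\hookrightarrow\RSC$ which is the identity on $W$; let $V_n\subset|\RSC|$ be the image of this embedding, and define $\varphi_n$ by transporting $\varphi'$ along it.

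Because $v_n\in V_n$ we have $\bigcup_n V_n=|\RSC|$, and the compatibility of the $\varphi_n$ built into the construction makes them into a coherent system whose direct limit is a bijection $\varphi_\infty\colon|\RSC|\to|\simplex{\infty}|$. Both spaces carry the weak topology determined by their finite subcomplexes, and any finite subcomplex of $|\RSC|$ is contained in some $|\RSC\restr V_n|$, on which $\varphi_\infty$ reduces to $\varphi_n$ (composed with $\R^{n+1}\hookrightarrow\R^\N$) and is therefore a homeomorphism onto its image; dually, every finite subcomplex of $|\simplex{\infty}|$ sits inside a standard $n$-simplex $\varphi_n(|\RSC\restr V_n|)$, where $\varphi_\infty^{-1}$ agrees with $\varphi_n^{-1}$. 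Hence $\varphi_\infty$ is a homeomorphism. The only real obstacle is orchestrating the interplay between the two construction lemmas and weak homogeneity: one must check that the hypotheses of Lemma~\ref{estecono} are preserved through the induction (which follows from the compatibility built into Lemma~\ref{subofcone}) and that its auxiliary finite complex $\SC'$ can always be realised inside $\RSC$ while fixing the earlier vertices, which is precisely what weak homogeneity grants.
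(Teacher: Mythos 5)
Your proposal is correct and follows essentially the same route as the paper's own proof: the same induction building nested finite induced subcomplexes exhausting $\RSC$, with Lemma~\ref{subofcone} supplying the cone-point extension, Lemma~\ref{estecono} completing each stage to a full $n$-simplex, and weak homogeneity realising the auxiliary complex inside $\RSC$ while fixing the previously chosen vertices. Your explicit verification that the direct limit of the $\varphi_n$ is a homeomorphism with respect to the weak topologies is a welcome elaboration of a step the paper leaves implicit.
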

\begin{proof}
For each $n \in \mathbb{N}$ we construct by induction on $n$ 
a finite induced simplicial complex $\SC_n \subset \RSC$ and a piecewise linear map $\varphi_n \colon |\SC_n| \to \mathbb{R}^{n+1}$ such that 
\begin{itemize}
\item $\SC_n \subset \SC_{n+1}$ and $\cup _n \SC_n = \RSC$; 
\item $\varphi_n$ is a piecewise linear homeomorphism to the standard $n$-simplex in $\mathbb{R}^{n+1}$;
\item the restriction of $\varphi_{n+1}$ to $|\SC_n|$ coincides with $\varphi_n$ followed by the inclusion of $\mathbb{R}^{n+1}$ into $\mathbb{R}^{n+2}$ as the linear subspace with vanishing last coordinate.
\end{itemize}
Fix a bijection between the vertex set of $\RSC$ and $\mathbb{N}$, thus identifying the vertex set of $\RSC$ with $\mathbb{N}$.  For $n=0$ we let $\SC_0$ be the vertex $0 \in \mathbb{N}$ of $\RSC$; thus $|\SC_0|$ is the standard $0$-simplex and we let $\varphi_0$ be the inclusion of $|\SC_0|$ in its geometric realisation in $\mathbb{R}^1$.

Suppose that $\SC_{n-1}$ and $\varphi_{n-1}$ have already been defined and let $v \in \mathbb{N}$ be the least vertex not already appearing in $\SC_{n-1}$.  Let $\SC \subset \RSC$ be the simplicial complex induced on the vertices of $\SC_{n-1}$ and $v$.
Using Lemma~\ref{subofcone},
extend $\varphi_{n-1}$ to a piecewise linear map $\varphi \colon |\SC | \to \mathbb{R}^{n+1}$ by defining $\varphi(v) = (0, \ldots , 0,1)$, and for each vertex $w$ of $\SC_{n-1}$, defining $\varphi(w)=(\varphi_{n-1}(w),0)$.

We are therefore in a position to apply Lemma~\ref{estecono} to 
the simplicial complex $\SC $ and  the piecewise linear map $\varphi$: 
let $\SC'$ and $\varphi'$ be as in that lemma.  
By 
weak homogeneity (see Definition~\ref{wkhom}) of $\RSC$, there is an induced subcomplex $\SC_n$ of $\RSC$ containing $\SC_{n-1}$ and an isomorphism $\iota \colon \SC_n \to \SC'$ such that $\iota$ is the identity on $\SC_{n-1}$.  Let $\varphi_n \colon | \SC_n| \to \mathbb{R}^{n+1}$ be the the piecewise linear map defined by $\varphi_n(w) = \varphi'(\iota(w))$ for each vertex $w \in \SC_n$.  The complex $\SC_n$ and the map $\varphi_n$ satisfy the requirements, and the theorem follows.
\end{proof}

We do not expect
contractibility to be a first order property in our
language.
\begin{qn}\label{C01}
Is it the case that almost all finite simplicial complexes have 
contractible realisation (under the frame-wise uniform measure)?
\end{qn}
It follows from Theorems~\ref{01Law} and \ref{toptriv} that 
a negative answer to Question~\ref{C01} would imply that
contractibility is not a first order property.

\section{Acknowledgement}

We would like to thank Anand Pillay for pointing us to the work of
Koponen~\cite{Kop:APEPRlCS} after seeing a preliminary version of this paper.


\bibliographystyle{plain}
\bibliography{RSC}

\providecommand\Sato[1]{}
\begin{thebibliography}{10}

\bibitem{BlH:PAAGC}
Andreas Blass and Frank Harary.
\newblock Properties of almost all graphs and complexes.
\newblock {\em Journal of Graph Theory}, 3:225--240, 1979.

\bibitem{BoP:AASCP}
B{\'e}la Bollob{\'a}s and E.~M. Palmer.
\newblock Almost all simplicial complexes are pure.
\newblock {\em Utilitas Math.}, 12:101--111, 1977.

\bibitem{EbF:FMT}
Heinz-Dieter Ebbinghaus and J\"org Flum.
\newblock {\em Finite Model Theory}.
\newblock Springer Monographs in Mathematics. Springer, Berlin, 2 edition,
  2006.

\bibitem{ErR:AsG}
P.~Erd{\H{o}}s and A.~R{\'e}nyi.
\newblock Asymmetric graphs.
\newblock {\em Acta Math. Acad. Sci. Hungar}, 14:295--315, 1963.

\bibitem{Fag:PFM}
Ronald Fagin.
\newblock Probabilities on finite models.
\newblock {\em Journal of Symbolic Logic}, 4(1):50--58, March 1976.

\bibitem{GKLT:RDR}
Yu.~V. Glebskii, D.~I. Kogan, M.~I. Liogon'kii, and V.~A. Talanov.
\newblock Range and degree of realizability of formulas in the restricted
  predicate calculus.
\newblock {\em Cybernetics}, 5(2):142--154, March 1969.

\bibitem{Hall:SCLFG}
P.~Hall.
\newblock Some constructions for locally finite groups.
\newblock {\em Journal of the London Mathematical Society, Second Series},
  34:305--319, 1959.

\bibitem{Hod:MT}
Wilfrid Hodges.
\newblock {\em Model Theory}.
\newblock Cambridge University Press, 1993.

\bibitem{Kop:APEPRlCS}
Vera Koponen.
\newblock Asymptotic probabilities of extension properties and random
  $l$-colourable structures.
\newblock {\em Annals of Pure and Applied Logic}, 163:391--438, 2012.

\bibitem{Len:PrG}
Hendrik Lenstra.
\newblock Profinite groups.
\newblock Unpublished notes available at
  http://websites.math.leidenuniv.nl/algebra/Lenstra-Profinite.pdf.

\bibitem{LiM:HCR2C}
Nathan Linial and Roy Meshulam.
\newblock Homological connectivity of random 2-complexes.
\newblock {\em Combinatorica}, 26(4):475--487, 2003.

\bibitem{MTe:SSAG}
Dugald Macpherson and Katrin Tent.
\newblock Simplicity of some automorphism groups.
\newblock {\em Journal of Algebra}, 342:40--52, 2011.

\bibitem{MSS:SMSA}
Mirco~A. Mannucci, Lisa Sparks, and Daniele~C. Struppa.
\newblock Simplicial models of social aggregation {I}.
\newblock arXiv:cs/0604090v1, 2006.

\bibitem{MeW:HCRkdC}
R.~Meshulam and N.~Wallach.
\newblock Homological connectivity of random $k$-dimensional complexes.
\newblock {\em Random Structures and Algorithms}, 34(3):408--417, May 2009.

\bibitem{Neu:PPG}
B.~H. Neumann.
\newblock Permutational products of groups.
\newblock {\em Journal of the Australian Mathematical Society}, 1(3):299--310,
  August 1960.

\bibitem{Obe:A01LC}
W.~Oberschelp.
\newblock Asymptotic {$0-1$} laws in combinatorics.
\newblock In Dieter Jungnickel and Klaus Vedder, editors, {\em Combinatorial
  theory ({S}chloss {R}auischholzhausen, 1982)}, volume 969 of {\em Lecture
  Notes in Mathematics}, pages 276--292, Berlin, 1982. Springer.

\bibitem{Obe:MSNTWI}
Walter Oberschelp.
\newblock Monotonicity for structure numbers in theories without identity.
\newblock In D.~Foata, editor, {\em Combinatoire et Repr\'esentation du Groupe
  Sym\'etrique, Strasbourg 1976}, number 579 in Lecture Notes in Mathematics,
  pages 297--308. Springer, 1977.

\bibitem{Rad:UGUF}
R.~Rado.
\newblock Universal graphs and universal functions.
\newblock {\em Acta Arithmetica}, (9):331--340, 1964.

\bibitem{Tru:GCUG}
J.~K. Truss.
\newblock The group of the countable universal graph.
\newblock {\em Math. Proc. Camb. Phil. Soc.}, 98:213--245, 1985.

\bibitem{Tru:43}
J.~K. Truss.
\newblock The automorphism group of the random graph: four conjugates good,
  three conjugates better.
\newblock {\em Discrete Mathematics}, 268:257--271, 2003.

\bibitem{Ury:SEMU}
Pavel~S. Urysohn.
\newblock Sur un espace m\'etrique universel.
\newblock {\em Bulletin des Sciences Math\'ematiques}, 51:43--64, 74--90, 1927.

\bibitem{Wil:PrG}
John~S. Wilson.
\newblock {\em Profinite groups}, volume~19 of {\em London Mathematical Society
  Monographs. New Series}.
\newblock The Clarendon Press Oxford University Press, New York, 1998.

\end{thebibliography}

\printindex

\end{document}